\newtheorem{theorem}{Theorem}[section]
\newtheorem{lemma}[theorem]{Lemma}
\newtheorem{proposition}[theorem]{Proposition}
\newtheorem{corollary}[theorem]{Corollary}
\newtheorem{fact}[theorem]{Fact}
\newtheorem{claim}{Claim}
\newtheorem{definition}[theorem]{Definition}
\newtheorem{remark}[theorem]{Remark}
\newcommand{\bl}{\begin{lemma}}
\newcommand{\el}{\end{lemma}}
\newcommand{\bfa}{\begin{fact}}
\newcommand{\efa}{\end{fact}}
\newcommand{\bpr}{\begin{proposition}}
\newcommand{\epr}{\end{proposition}}
\newcommand{\bp}{\begin{proof}}
\newcommand{\ep}{\end{proof}}
\newcommand{\bd}{\begin{definition}}
\newcommand{\ed}{\end{definition}}
\newcommand{\bt}{\begin{theorem}}
\newcommand{\et}{\end{theorem}}
\newcommand{\bc}{\begin{corollary}}
\newcommand{\ec}{\end{corollary}}
\newcommand{\bn}{\begin{notation}}
\newcommand{\en}{\end{notation}}
\newcommand{\br}{\begin{remark}}
\newcommand{\er}{\end{remark}}
\newcommand{\bcl}{\begin{claim}}
\newcommand{\ecl}{\end{claim}}
\newcommand{\N}{{\mathbb{N}}}
\newcommand{\e}{\varepsilon}
\newcommand{\bnum}{\begin{enumerate}}
\newcommand{\enum}{\end{enumerate}}
\numberwithin{subsection}{section}
\numberwithin{equation}{section}
\newcommand{\vvert}[1][\cdot]{\vert #1\vert}
\newcommand{\norm}[1][\cdot]{\lVert #1\rVert}
\DeclareMathOperator{\supp}{supp}
\begin{document}

\title{Operators in tight by support Banach spaces}
\author{Antonis Manoussakis, Anna Pelczar-Barwacz}
\thanks{}
\address[A. Manoussakis]{Department  of Environmental Engineering, Technical University of Crete,\\ GR 73100, Greece}
\email{amanousakis@isc.tuc.gr}
\address[A. Pelczar-Barwacz]{Institute of Mathematics, Faculty of Mathematics and Computer Science, Jagiello\-nian University, {\L}ojasiewicza 6, 30-348 Kra\-k\'ow, Poland}
\email{anna.pelczar@im.uj.edu.pl}
 \keywords{tight by support basis, strictly singular operator}
\thanks{Research partially supported by $\mathrm{API\Sigma TEIA}$ program/1082}
\subjclass[2000]{46B20, 46B03}

\begin{abstract}
We answer the question of W.T.~Gowers, giving an example of a bounded operator on a subspace of Gowers unconditional space which is not a strictly singular perturbation of a restriction of a diagonal operator. We make some observations on operators in arbitrary tight by support Banach space, showing in particular that in such space no two isomorphic infinitely dimensional subspaces form a direct sum. 
\end{abstract}

\maketitle

In \cite{GM0} W.T.~Gowers and B.~Maurey built the first hereditarily indecomposable (HI) Banach space $X_{GM}$, i.e. a space  whose none infinitely dimensional subspace admits a non-trivial bounded projection. They proved also that any operator on a subspace of $X_{GM}$ is a strictly singular perturbation of a multiple of the identity. Recall that an operator is strictly singular if none of its restriction to an infinitely dimensional subspace is an isomorphism onto its image. Gowers-Maurey construction opened the field of study of spaces with a small family of bounded operators. The celebrated space of S.A.~Argyros and R.~Haydon \cite{AH} provided an extreme example in the area; their space is an $\mathscr{L}_\infty$ HI space, on which any bounded operator is a compact perturbation of a multiple of the identity. 

A natural question arises how small family of bounded operators on Banach spaces with an unconditional basis could be. Obviously all diagonal operators with uniformly bounded entries are continuous on such space, therefore the most one can expect is a hereditary "diagonal + strictly singular" property: any bounded operator on a subspace of the space is a strictly singular perturbation of a restriction of a diagonal operator. 

Among the properties to be considered in this context are different types of tightness, studied in \cite{FR1,FR2}, which describe the structure of the family of isomorphisms inside the space. The strongest type is tightness by support. Recall that a  Banach space $X$ with a basis is tight by support if no two disjointly supported infinitely dimensional subspaces of $X$ are isomorphic \cite{FR1}. Any tight by support basis is necessarily unconditional. The typical example of a Banach space tight by support is Gowers unconditional space $X_U$, the unconditional version of Gowers-Maurey space \cite{FR2,G2}. It follows easily that the hereditary "diagonal + strictly singular" property implies tightness by support. W.T.~Gowers asked if the implication can reversed (Problem 5.12 \cite{G2}), in particular if $X_U$ has hereditary "diagonal + strictly singular" property  (Problem 5.13 \cite{G2}). It is known that any bounded operator on the whole space $X_U$ is a strictly singular perturbation of a diagonal operator \cite{GM0}. Adapting arguments of \cite{ABR} one can prove analogous result for any bounded operator $T: Y\to Y$, where $Y$ is a block subspace of $X_U$. W.T.~Gowers \cite{G3} also proved that any isomorphism between block subspaces of a tight by support Banach space is a strictly singular perturbation of a restriction of an invertible diagonal operator.

We answer the questions by constructing a bounded projection on a direct sum of two block subspaces of $X_U$ which is not a strictly singular perturbation of a restriction of a diagonal operator (Theorem \ref{gow2}). The construction uses the block sequence of \cite{KL} in Schlumprecht space generating an $\ell_1$-spreading model and canonical properties of Gowers unconditional space, thus can be easily adapted to other spaces of Gowers-Maurey type and leaves open the question on example of a Banach space with the hereditary "diagonal + strictly singular" property. We reproduce next the construction in arbitrary block subspace of $X_U$ using the results of \cite{FS} (Theorem \ref{gow3}). 

We prove also positive results on bounded operators on arbitrary Banach space $X$ with a tight by support basis. In particular we show that any bounded operator on a subspace generated by a weakly null sequence $(x_n)$ in such space has a restriction to a subspace generated by some subsequence $(x_{k_n})$ of the form $S+D|_{[x_{k_n}]}$, with $S$ strictly singular and $D$ diagonal (Theorem \ref{th1}). If we allow restricting to a block subspace we can replace the diagonal operator $D$ by a multiple of the identity (Theorem \ref{id}), which implies that no two isomorphic infinitely dimensional subspaces of $X$ form a direct sum (Corollary \ref{cor}). 

In case of Gowers unconditional space one can strengthen Theorem \ref{th1} - we prove that any bounded operator on a block subspace $Y$ of $X_U$ into $X_U$ is of the form $S+D|_Y$, with $S$ strictly singular and $D$ diagonal, generalizing earlier results (Proposition \ref{gow1}).

\

We recall briefly the standard notation. Given any $E,F\subset \N$ we write $E<F$, if $\max E<\min F$. Let $X$ be a Banach space with a basis $(e_i)$. Given any $G\subset\N$ by $P_G$ we denote the projection $X\to [e_i: i\in G]$. The \textit{support} of a vector $x=\sum_i x_i e_i$ is the set $\supp x =\{ i\in \N : x_i\neq 0\}$. The support of a subspace $Y$ is the union of supports of all elements of $Y$. We write $x<y$ for vectors $x,y\in X$, if $\supp x<\supp y$. Any sequence $(x_n)\subset X$ with $x_{1}<x_{2}<\dots$ is called a  \textit{block sequence}, a closed subspace spanned by an infinite block sequence $(x_n)$ is called a \textit{block subspace}. Given any basic sequence $(x_n)$ by $[x_n]$ we denote the closed vector space spanned by $(x_n)$.

\section{GENERAL CASE}
In this section we show some positive results on bounded operators on Banach spaces which are tight by support. 
We recall
\begin{definition}\cite{FR1}
 A basis of a Banach space is called tight by support, if no two infinitely dimensional subspaces with disjoint supports are isomorphic.
\end{definition}
\begin{remark}\cite{FR1}\label{unc}
 A tight by support basis is unconditional.
\end{remark}
We are grateful to Valentin Ferenczi for bringing to our attention Remark \ref{unc}.

Throughout this section $X$ denotes a Banach space with a tight by support basis $(e_i)$. The main tool is provided by the following decomposition result, which uses the notion of a diagonal-free operator. We call an operator $R$ defined on a block subspace $[x_n]\subset X$ into $X$ \textit{diagonal-free} provided $\supp x_n\cap\supp Rx_n=\emptyset$ for any $n\in\N$. 
\begin{proposition}\label{prop1} 
Let $X$ be a Banach space with a tight by support basis $(e_i)$. Let $T$ be a bounded operator on a block subspace $[x_n]\subset X$. Then $T=D|_{[x_n]}+S+R$ for some bounded operators $D,S,R$ with $D$ diagonal, $S$ strictly singular and $R$  diagonal-free.

Moreover, if $T$ satisfies $\supp Tx_n\cap \supp x_m=\emptyset$ for any $n\neq m$, then the above formula holds with $R=0$.
\end{proposition}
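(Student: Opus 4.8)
The plan is to split $T$ according to how each $Tx_n$ overlaps the supports of the basis vectors $x_1,x_2,\dots$. First I would fix, for each $n$, the decomposition of the vector $Tx_n$ as $Tx_n = y_n + z_n + w_n$, where $y_n = P_{\supp x_n}Tx_n$ is the part living on the support of $x_n$ itself, $w_n = P_{\bigcup_{m\ne n}\supp x_m}(Tx_n - y_n)$ is the part spread over the supports of the other block vectors, and $z_n$ is the remainder, supported off $\bigcup_m \supp x_m$. Since the basis is unconditional (Remark \ref{unc}), each of the three coordinate projections involved is bounded on $X$, so this really is a bounded splitting of $T$ into three pieces on $[x_n]$.

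Next I would handle the middle piece, the one sending $x_n$ to $w_n$. For each pair $m \ne n$ write $a_{m,n} x_m$ for the component of $Tx_n$ on the support of $x_m$ — more precisely set $w_{m,n} = P_{\supp x_m} Tx_n$; then the operator $x_n \mapsto w_n$ has "matrix" $(w_{m,n})_{m\ne n}$ with zero diagonal. The key observation is that tightness by support forces this off-diagonal part to be \emph{strictly singular}: if it were an isomorphism on some infinite-dimensional subspace $Z \subset [x_n]$, one could (after passing to a block subspace of $Z$ and using a gliding-hump / small-perturbation argument, exactly as in the $R=0$ clause below) produce two disjointly supported infinite-dimensional subspaces that are isomorphic, contradicting the definition. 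So I would absorb this piece into $S$. The diagonal operator $D$ is then defined by $De_i = \lambda_i e_i$, where on $\supp x_n$ the scalars $\lambda_i$ are chosen so that $D x_n = y_n$ — this is consistent because the supports of distinct $x_n$ are disjoint, and $D$ is bounded since $\|y_n\| \le \|P_{\supp x_n}\|\,\|T\|\,\|x_n\|$ together with unconditionality controls $D$ on all of $X$. Finally $R$ is the operator $x_n \mapsto z_n$, which is diagonal-free by construction since $\supp z_n \cap \supp x_n = \emptyset$. Boundedness of $R = T - D|_{[x_n]} - S$ is automatic.

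The main obstacle is the strict singularity of the off-diagonal piece, i.e. proving that $x_n \mapsto w_n$ has no isomorphic restriction. The argument I would run: suppose $U = (x_n \mapsto w_n)$ is an isomorphism on an infinite-dimensional subspace $Z$; pass to a normalized block sequence $(u_k) \subset Z$ with $\inf_k \|U u_k\| > 0$, and then, by a standard perturbation, to a further block sequence $(v_k)$ of $(u_k)$ such that each $U v_k$ is (up to arbitrarily small error) finitely supported and the supports $\supp U v_k$ can be arranged to be, say, increasing and disjoint from $\bigcup_k \supp v_k$ after discarding a controlled "overlap" part — here one uses that $U v_k$ is, by definition, supported on $\bigcup_{m} \supp x_m$ but has no mass on the $x_m$'s that make up $v_k$ itself, so thinning the sequence pushes the relevant supports apart. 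Then $[v_k]$ and $[U v_k]$ are disjointly supported and isomorphic via (a small perturbation of) $U$, contradicting tightness by support.

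For the \textbf{moreover} clause, if $\supp Tx_n \cap \supp x_m = \emptyset$ for all $n \ne m$, then every $w_{m,n} = 0$, hence $w_n = 0$ for all $n$; so the off-diagonal piece vanishes, $S$ can be taken to be $0$ in the splitting (only $D$ and the diagonal-free $R$, which now equals $x_n \mapsto z_n = Tx_n - y_n$, remain), and we get $T = D|_{[x_n]} + R$ with $R$ diagonal-free, i.e. $R = 0$ is not forced but the term $S$ drops out and $R$ absorbs exactly the off-$x_n$ mass — giving the stated conclusion with the strictly singular term removed.
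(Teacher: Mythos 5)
There is a genuine gap at the very heart of your proposal: the diagonal operator $D$ you define need not be bounded. Setting $\lambda_i = Tx_n(i)/x_n(i)$ for $i\in\supp x_n$ is exactly the paper's idea, but a diagonal operator on an unconditional basis is bounded if and only if $\sup_i|\lambda_i|<\infty$, and these ratios can blow up precisely where $|x_n(i)|$ is tiny compared to $|Tx_n(i)|$. The inequality $\|y_n\|\leq\|P_{\supp x_n}\|\,\|T\|\,\|x_n\|$ you invoke controls only $\|Dx_n\|$, not $\sup_i|\lambda_i|$, and therefore says nothing about $\|D\|$. Handling this is the whole point of the paper's argument: it introduces the sets $A_{n,k}=\{i\in\supp x_n: |x_n(i)|\leq 2^{-k}|Tx_n(i)|\}$, zeroes out the diagonal there (so that $\|D_k\|\leq 2^k$ by construction), and shows separately, via tightness, that the residual piece $T_k = P_{A_k}\circ T$ is strictly singular for $k$ large. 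Your proof simply does not address this.

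A related problem is the claim that the split $Tx_n = y_n + z_n + w_n$ is automatically a split of $T$ into three bounded operators ``since each coordinate projection is bounded.'' The projection appearing in $y_n = P_{\supp x_n}Tx_n$ depends on $n$, so the operator $\sum a_n x_n\mapsto\sum a_n P_{\supp x_n}Tx_n$ is a diagonal-extraction on the block matrix of $T$, not a fixed coordinate projection composed with $T$; such extractions are not bounded in general (already on $\ell_2$). Only $x_n\mapsto z_n = P_{\N\setminus\bigcup\supp x_m}Tx_n$ factors as $P_{\N\setminus\bigcup\supp x_m}\circ T$ and is thus automatically bounded. In the paper, boundedness of the off-diagonal $R$ follows only \emph{after} $D_k$ and $T_k$ are known to be bounded, by taking the difference. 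Two smaller issues: the support argument in your strict-singularity sketch is off, since for $v_k = \sum_{n\in J_k}a_n x_n$, the vector $\sum_{n\in J_k}a_n w_n$ can certainly intersect $\supp x_m$ for $m\in J_k$ (via the terms $w_n$ with $n\neq m$), so the supports are not disjoint as written; and your reading of the ``moreover'' clause is backwards — the conclusion is $R=0$ (with $S$ possibly nonzero, absorbing both the off-support part and $T_k$), not $S=0$.
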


\begin{proof} Let $(x_n)$ be a normalized block basis and $T:[x_n]\to X$ be a bounded operator  with $\norm[T]=C>0$.  Since $X$ is tight by support, the operator $P\circ T$, where $P$ is the projection on $[e_i:\ i\not\in \cup\supp(x_{n})]$ is strictly singular. Thus we can assume that $\cup_{n}\supp(x_n)=\N$.

For any $n,k\in\N$ put 
$$
A_{n,k}=\{i\in\supp x_n: \ |x_n(i)|\leq \frac{1}{2^k}|Tx_n(i)|\}
$$ 
and $A_k=\cup_{n\in\N}A_{n,k}$, $k\in\N$. 

For any $k\in\N$ put $T_k=P_k\circ T$, where $P_k$ is the projection from $X$ onto $[e_i: \ i\in A_k]$, and let $D_k: X\to X$ be the diagonal operator defined by $D_{k}(e_{i})=\lambda_{i}e_{i}$ where 
$$
\lambda_{i}=\begin{cases}0\,\,\, &\textrm{if}\,\,\ i\in A_k,\\
Tx_n(i)/x_n(i) &\textrm{if}\,\, i\in\supp x_n\setminus A_k=\supp
x_n\setminus A_{n,k}.
\end{cases}
$$ 
By the definition of $A_{n,k}$'s we have $\norm[D_k]\leq 2^k$. 

Fix $k\in\N$ and assume that $T_k$ is not strictly singular. Thus $T_k$ is an isomorphism between some infinitely dimensional subspaces $U\subset [x_n]$ and $W\subset P_k(X)$. Consider a bounded operator $R_k=(Id-P_k)\circ (T_k|_U)^{-1}: W\to [e_i: \ i\not\in A_k]$.

As $X$ is tight by support and $\supp R_k(W)\cap \supp W=\emptyset$, there is some infinitely dimensional subspace $V\subset W$ such that $\norm[R_k|_V]\leq (2C)^{-1}$. As $(T_k|_U)^{-1}$ is an isomorphism, the subspace $Z=(T_k)^{-1}(V)$ is also infinitely dimensional. Take $x\in (T_k)^{-1}(V)$ and compute 
\begin{align*}
\norm[T_kx] &\leq \norm[Tx]\leq C\norm[x]
\\
&\leq C\norm[P_kx]+C\norm[x-P_kx]\notag
\\ 
&=C\norm[P_kx]+C\norm[R_k(T_kx)]
\notag 
\\
&\leq C\norm[P_kx]+\frac{1}{2}\norm[T_kx].
\notag
\end{align*}
Hence $\norm[T_kx]\leq 2C\norm[P_kx]$ for any $x\in Z$. As $Z\subset U$ also $T_k|_Z$ is an isomorphism onto its image.

On the other hand for any $x\in [x_n]$ and $i\in \supp P_kx\subset A_k$ we have 
$$
|P_kx(i)|\leq \frac{1}{2^k}|Tx(i)|=\frac{1}{2^k}|T_kx(i)|.
$$ 
It follows that for any $x\in [x_n]$ we have $\norm[P_kx]\leq \frac{1}{2^k}\norm[T_kx]$, which for sufficiently big $k$ gives contradiction for any non-zero $x\in Z$. Therefore for sufficiently big $k$ the operator $T_k$ is strictly singular. 

Now we have 
\begin{align*}
&(D_k|_{[x_n]}+ T_k-T)(\sum_na_nx_n)\\
&=\sum_na_n\sum_{i\in\supp x_n}\lambda_ix_n(i)e_i+\sum_na_nP_kTx_n-\sum_na_nTx_n\\
&=\sum_na_n\sum_{i\in\supp x_n\setminus A_{n,k}}Tx_n(i)e_i+\sum_na_n\sum_{i\in A_k}Tx_n(i)e_i+\sum_na_n\sum_{i\in\N}Tx_n(i)e_i\\
&=\sum_na_n\sum_{i\in\N\setminus (A_k\cup\supp x_n)}Tx_n(i)e_{i}\,.
\end{align*}
Therefore the operator $R=T-D_k|_{[x_n]}-T_k$ is diagonal-free.

Now if we assume that $\supp Tx_m\cap \supp x_n=\emptyset$ for $n\neq m$, then, as we assumed that $\supp[x_n]=\N$, we have that $\supp Tx_n\subset \supp x_n$ for any $n\in\N$.

Then $T=D_k|_{[x_n]}+T_k$, as
\begin{align*}
\left(D_k|_{[x_n]}+T_k\right)(\sum_na_nx_n)&=\sum_na_n\sum_{i\in\supp x_n}\lambda_ix_n(i)e_i+\sum_na_nP_kTx_n\\
&=\sum_na_n\sum_{i\in\supp x_n\setminus A_{n,k}}Tx_n(i)e_i+\sum_na_n\sum_{i\in A_k}Tx_n(i)e_i\\
&=\sum_na_nTx_n\,.
\end{align*}
For the last equality recall that $\supp Tx_n\subset \supp x_n$ for any $n\in\N$.
\end{proof}
Proposition \ref{prop1} implies immediately the following result.
\begin{theorem}\label{th1} 
Let $X$ be a Banach space with a tight by support basis.

Let $T:[x_n]\to X$ be a bounded operator on a subspace spanned by a weakly null sequence $(x_n)\subset X$. Then there exists a subsequence $(x_{n})_{n\in M}$ such that $T|_{[x_{n}:\ n\in M]}=D|_{[x_{n}:\ n\in M]}+S$, where $ D:X\to X$  is a bounded diagonal operator  and   $S: [x_n: \ n \in M]\to X$  is a bounded strictly singular operator.

In particular the assertion holds if $(x_n)$ is a block sequence.
\end{theorem}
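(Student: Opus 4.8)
The plan is to derive Theorem~\ref{th1} from the ``moreover'' part of Proposition~\ref{prop1}. That part produces the clean decomposition $T=D|_{[x_n]}+S$ with no diagonal-free remainder \emph{exactly} when $\supp Tx_n\cap\supp x_m=\emptyset$ for all $n\neq m$. For a weakly null sequence this cannot be achieved literally, since a weakly null vector may have infinite (even full) support; so the first step is to replace $(x_n)$ and $(Tx_n)$ \emph{simultaneously} by block sequences that approximate them closely, apply Proposition~\ref{prop1} there, and then transfer the decomposition back, absorbing the errors (which will be compact) into the strictly singular term.

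Passing to a subsequence I would assume $(x_n)$ is a normalized basic sequence, with a uniform bound $K$ on the basis constants and biorthogonal functional norms of all the basic sequences that occur below. Fix a summable sequence $(\e_j)$ with $\sum_j\e_j$ small. Using that each $P_{[1,p]}$ and each $P_{[1,p]}T$ is a finite rank operator, hence maps the weakly null sequences $(x_n)$ and $(Tx_n)$ to norm null sequences, together with $x_n=\lim_p P_{[1,p]}x_n$ and the analogue for $Tx_n$, run a gliding hump: choose $n_1<n_2<\cdots$ and $0=p_0<p_1<\cdots$ recursively so that, writing $I_j=(p_{j-1},p_j]$, the truncations $u_j:=P_{I_j}x_{n_j}$ and $v_j:=P_{I_j}Tx_{n_j}$ satisfy $\|x_{n_j}-u_j\|<2\e_j$ and $\|Tx_{n_j}-v_j\|<2\e_j$. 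Then $(u_j)$ and $(v_j)$ are block sequences with $\supp u_j\cup\supp v_j\subset I_j$ and $I_1<I_2<\cdots$, so in particular $\supp v_i\cap\supp u_j=\emptyset$ whenever $i\neq j$; and if $\sum_j\e_j$ was chosen small enough the standard perturbation lemma gives an isomorphism $J\colon[u_j]\to[x_{n_j}]$ with $Ju_j=x_{n_j}$, of the form $J=\iota+A$ with $\iota$ the inclusion of $[u_j]$ into $X$ and $A$ compact (its matrix entries have summable norms), $\|J^{-1}\|\le 2$.

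Now consider $\Theta\colon[u_j]\to X$, $\Theta u_j=v_j$, on the \emph{block} subspace $[u_j]$. Its boundedness follows from the identity $\Theta=(T|_{[x_{n_j}]}-B)\circ J$, where $B\colon[x_{n_j}]\to X$ is the compact operator with $Bx_{n_j}=Tx_{n_j}-v_j$; this detour is forced because the $u_j$ lie outside $[x_n]$, so $Tu_j$ is not defined. Since $\supp\Theta u_i\cap\supp u_j=\supp v_i\cap\supp u_j=\emptyset$ for $i\neq j$, the ``moreover'' part of Proposition~\ref{prop1} yields $\Theta=D|_{[u_j]}+S_1$ with $D\colon X\to X$ a bounded diagonal operator and $S_1\colon[u_j]\to X$ strictly singular. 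Transferring back, $T|_{[x_{n_j}]}=\Theta J^{-1}+B=D|_{[u_j]}J^{-1}+S_1J^{-1}+B$. Here $S_1J^{-1}$ is strictly singular (a strictly singular operator precomposed with the isomorphism $J^{-1}$), $B$ is compact, and since $D$ is diagonal we have $Dx_{n_j}-Du_j=D(x_{n_j}-u_j)$ with $\|x_{n_j}-u_j\|<2\e_j$ summable and $\|D\|<\infty$, so $x_{n_j}\mapsto D(x_{n_j}-u_j)$ is compact and $D|_{[u_j]}J^{-1}=D|_{[x_{n_j}]}+(\text{compact})$. Hence, with $M=\{n_j:j\in\N\}$, we get $T|_{[x_n:\,n\in M]}=D|_{[x_n:\,n\in M]}+S$ with $S$ strictly singular, as required.

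The main obstacle I anticipate is organizational rather than conceptual: choosing the $(\e_j)$ at once small enough for the perturbation lemma to deliver the isomorphism $J$ and summable enough that $A$, $B$ and $x_{n_j}\mapsto D(x_{n_j}-u_j)$ all define compact (hence strictly singular) operators, while being careful never to apply $T$ to the truncations $u_j$ themselves. For the final ``in particular'' assertion: a tight by support basis is unconditional (Remark~\ref{unc}) and cannot have a block sequence equivalent to the unit vector basis of $c_0$ or of $\ell_1$ --- otherwise its even- and odd-indexed subsequences would span disjointly supported isomorphic copies of $c_0$, resp.\ $\ell_1$, contradicting tightness by support --- so by James's characterization $X$ is reflexive, every normalized block sequence is weakly null, and the block case is a special case of the theorem.
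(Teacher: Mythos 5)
Your argument is correct and is exactly what the paper's one-line ``implies immediately'' hides: pass to a subsequence along which both $(x_{n})$ and $(Tx_{n})$ are summably close to block sequences $(u_j),(v_j)$ supported in a common interval partition, apply the \emph{moreover} clause of Proposition~\ref{prop1} to the auxiliary operator $\Theta u_j=v_j$ (obtaining a clean $D+S$ decomposition with no diagonal-free remainder), and absorb the resulting nuclear error terms into $S$. Your justification of the ``in particular'' clause is also a needed supplement the paper skips: tight by support excludes block copies of $c_0$ and $\ell_1$ (even- and odd-indexed subsequences would span disjointly supported isomorphic subspaces), so Remark~\ref{unc} plus James's theorem gives reflexivity, whence seminormalized block sequences are weakly null.
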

We can replace diagonal operator by a multiple of the identity, if we allow passing to a block sequence instead of subsequence.
\begin{theorem}\label{id}
Let $X$ be a Banach space with a  tight by support basis. 

Let $T:[x_n]\to X$ be a bounded operator on a block subspace $[x_n]\subset X$. Then there is an infinitely dimensional block subspace $W\subset [x_n]$ such that $T|_W=\alpha Id|_W+S$, for some scalar $\alpha$ and bounded strictly singular operator $S:[x_n]\to X$.
\end{theorem}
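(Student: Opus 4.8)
The plan is to reduce, via Theorem~\ref{th1}, to a diagonal operator plus a strictly singular one, and then to homogenize the diagonal part on a block subspace by exploiting tightness by support.

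First, since $(x_n)$ is a block sequence, I would apply Theorem~\ref{th1} to pass to a subsequence --- again a block sequence, still denoted $(x_n)$ --- so that $T=D|_{[x_n]}+S_0$ with $D\colon X\to X$ a bounded diagonal operator, $De_i=\lambda_ie_i$ and $\sup_i|\lambda_i|<\infty$, and $S_0\colon[x_n]\to X$ strictly singular. Since $S_0|_W$ is strictly singular for any block subspace $W$, it then suffices to produce a block subspace $W\subset[x_n]$ and a scalar $\alpha$ with $(D-\alpha Id)|_W$ strictly singular, for then $T|_W=\alpha Id|_W+\bigl((D-\alpha Id)|_W+S_0|_W\bigr)$ has the required form.

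The core step would be to show: for every $\varepsilon>0$ there are a block subspace $Y\subset[x_n]$ and a scalar $\beta$ with $\|(D-\beta Id)|_Y\|\le\varepsilon$. Here I would fix a constant $C$ depending only on the unconditional constant of $(e_i)$ and on $\|D\|$, split the bounded set $\{\lambda_i:i\in\N\}$ into finitely many pieces $J_1,\dots,J_m$ of diameter $<\varepsilon/C$, choose $\beta_l\in J_l$, and set $E_l=\{i:\lambda_i\in J_l\}$, so that the coordinate projections $P_{E_1},\dots,P_{E_m}$ restrict to bounded operators on $[x_n]$ summing to the identity. Then I would run a dichotomy on a shrinking block subspace $Y$ and a shrinking set $L'\subset\{1,\dots,m\}$ of active indices, keeping $\|(Id-\sum_{l\in L'}P_{E_l})|_Y\|$ below a small running error: pick $l_1\in L'$; if $P_{E_{l_1}}|_Y$ is not strictly singular it is bounded below on some block subspace $Y_1\subset Y$, and then $P_{E_{l_2}}|_{Y_1}$ must be strictly singular for every other $l_2\in L'$, since otherwise $P_{E_{l_1}}(Y'')$ and $P_{E_{l_2}}(Y'')$ would be isomorphic, infinite-dimensional and disjointly supported for a common block subspace $Y''\subset Y_1$, contradicting tightness by support; hence $(Id-P_{E_{l_1}})|_{Y_1}$ equals a strictly singular operator up to the running error, and restricting to a block subspace $W\subset Y_1$ on which that part is tiny gives $\|(Id-P_{E_{l_1}})|_W\|$ small, whence $\|(D-\beta_{l_1}Id)|_W\|\le\varepsilon$ by a routine estimate, and we stop with $\beta=\beta_{l_1}$. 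If instead $P_{E_{l_1}}|_Y$ is strictly singular, pass to a block subspace on which its norm is negligible, delete $l_1$ from $L'$, and repeat; the process stops after at most $m$ rounds, and when $|L'|=1$ the invariant already gives $\|(Id-P_{E_{l_0}})|_Y\|$ small, so again $\beta=\beta_{l_0}$ works.

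To finish, I would iterate the core step along a nested chain $[x_n]\supset W_1\supset W_2\supset\cdots$ with $\|(D-\beta_k Id)|_{W_k}\|\le 2^{-k}$; comparing $\beta_k$ and $\beta_{k+1}$ on a unit vector of $W_{k+1}$ shows $(\beta_k)$ is Cauchy, say $\beta_k\to\alpha$. Choosing normalized block vectors $w_1<w_2<\cdots$ with $w_k\in W_k$, the estimate $\|(D-\alpha Id)w_k\|\le 2^{-k}+|\beta_k-\alpha|$ is summable, so --- $(w_k)$ being a seminormalized block basic sequence --- truncating the tail presents $(D-\alpha Id)|_{[w_k]}$ as a norm limit of finite-rank operators; hence it is compact, a fortiori strictly singular, and $W=[w_k]$ with $\alpha$ completes the proof. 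The step I expect to be the main obstacle is the correct deployment of tightness by support in the dichotomy: one must observe that whenever a coordinate projection $P_{E_l}$ is bounded below on a block subspace, its image there is disjointly supported from the image of any other $P_{E_{l'}}$, so tightness forbids two such projections from being simultaneously bounded below on one block subspace --- precisely the phenomenon that fails in non-tight spaces such as $\ell_2$. The remaining work is careful but routine bookkeeping of the errors accumulated over the at most $m$ rounds and over the final diagonalization.
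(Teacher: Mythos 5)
Your proposal is correct and takes essentially the same approach as the paper's proof: reduce via Theorem~\ref{th1} to a diagonal operator $D$ plus a strictly singular operator, partition the entries of $D$ into finitely many pieces of small diameter, use tightness by support to show that on a block subspace at most one of the associated coordinate projections fails to be strictly singular, and then diagonalize along a nested chain of block subspaces. Your explicit dichotomy with a running error spells out what the paper leaves as ``easily proved by induction on $d$,'' and your Cauchy-sequence argument for $(\beta_k)$ is a minor streamlining of the paper's cluster-point step, with no essential difference.
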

As for any isomorphism $T$ any scalar $\alpha$ given by the above Theorem is non-zero, we obtain the following 
\begin{corollary}\label{cor}
Let $X$ be a Banach space with a tight by support basis. 

Then for any isomorphic infinitely dimensional subspaces $Y,Z\subset X$  we have $\inf\{\norm[y-z]: \ y\in Y, \ z\in Z, \ \norm[y]=\norm[z]=1 \}=0$. 
\end{corollary}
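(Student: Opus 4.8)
The plan is to reduce the statement to Theorem \ref{id} by exploiting the unconditionality of the basis (Remark \ref{unc}) together with a standard perturbation/gliding-hump argument that lets us pass from arbitrary subspaces $Y,Z$ to block subspaces. First I would fix an isomorphism $U\colon Y\to Z$ and let $V\colon Z\to Y$ be its inverse. Arguing by contradiction, suppose the infimum is some $\de>0$; this says precisely that the ``angle'' between $Y$ and $Z$ is bounded away from zero, equivalently that $Y+Z$ is a direct sum and the associated projection $Q\colon Y+Z\to Y$ along $Z$ is bounded, with $\norm[y]\le C\norm[y+z]$ for all $y\in Y$, $z\in Z$ and some constant $C$ depending only on $\de$.

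Next I would move to a block setting. Since the basis is unconditional (Remark \ref{unc}), a routine small-perturbation argument produces normalized block sequences $(y_n)\subset X$ and a normalized sequence $(z_n)\subset Z$ with $\norm[y_n-\tilde y_n]$ and $\norm[z_n-\tilde z_n]$ summable to something tiny, where $\tilde y_n\in Y$, $\tilde z_n\in Z$; passing to a subsequence we may assume $(y_n)$ is a block sequence and $(z_n)$ is equivalent to a further block sequence, with $y_n$ close to $\tilde y_n=U^{-1}$ applied to the corresponding $z$-vector, so that on the block subspace $W_0=[y_n]$ the map sending $y_n\mapsto$ (the block approximant of $\tilde z_n$) extends to a bounded operator $T_0\colon W_0\to X$. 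The point of the contradiction hypothesis is that this $T_0$ is \emph{bounded below} on $W_0$ (its image being close to a subspace of $Z$, which is uniformly separated from $Y\ni$ the corresponding $y$'s, forcing $\supp y_n$ and $\supp T_0 y_n$ to interact in a way incompatible with tightness by support). Now apply Theorem \ref{id} to $T_0$ on $W_0$: there is a further block subspace $W\subset W_0$, a scalar $\al$, and a strictly singular $S$ with $T_0|_W=\al\,Id|_W+S$.

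The final step is to derive the contradiction from the trichotomy on $\al$. If $\al=0$ then $T_0|_W=S$ is strictly singular, contradicting that $T_0$ is bounded below on $W$ (hence on $W$, after the small perturbations are absorbed). If $\al\ne 0$, then $T_0|_W=\al Id|_W+S$ is an isomorphism onto its image, and after undoing the perturbations we obtain an infinitely dimensional subspace of $Y$ on which $y\mapsto \al y + (\text{strictly singular})$ maps into (a perturbation of) $Z$; but $\al y$ has essentially the same support as $y$, so some infinitely dimensional block subspace inside $Y$ and its image inside $Z$ are isomorphic \emph{and disjointly supported} (using that generic blocks of $Y$ can be arranged disjoint from $Z$, or conversely that $\al y$ and the $Z$-side differ by a norm-small, hence strictly singular, term)—contradicting tightness by support. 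Either way the assumption $\de>0$ is impossible, so $\inf\{\norm[y-z]\}=0$.

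The main obstacle I anticipate is the bookkeeping in the second paragraph: turning the abstract hypothesis ``$Y,Z$ form a direct sum with positive angle'' into an honest \emph{bounded-below} operator between a \emph{block} subspace of $X$ and $X$, while keeping the supports under control well enough that tightness by support can be invoked. This is the usual ``blocking lemma'' for unconditional bases, but one has to be careful that the perturbations chosen to block-ify $Y$ do not destroy the lower bound coming from the angle, and that the resulting operator still satisfies the hypotheses of Theorem \ref{id} (in particular that it is genuinely defined on all of $[x_n]$ for an appropriate block sequence $(x_n)$). Once that reduction is in place, the appeal to Theorem \ref{id} and the case analysis on $\al$ are short.
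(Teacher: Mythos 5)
The outline is sound through the reduction and the application of Theorem \ref{id}, and the $\alpha=0$ case is handled correctly (indeed $\alpha\neq 0$ is automatic for any isomorphism, independent of the contradiction hypothesis). The genuine gap is in the $\alpha\neq 0$ case: you claim that, because $\alpha y$ has \emph{essentially the same} support as $y$, you can manufacture an infinite-dimensional block subspace of $Y$ and an isomorphic, \emph{disjointly supported} image inside $Z$, contradicting tightness by support. That non sequitur cannot be repaired: if $T_0 v=\alpha v+Sv$ and $S$ is made small on a further subspace, then $T_0 v$ and $\alpha v$ are norm-close, so the $Y$-vector and the $Z$-vector (after undoing the perturbations) have \emph{nearly coinciding} supports, not disjoint ones. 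Tightness by support simply does not apply here; the parenthetical ``generic blocks of $Y$ can be arranged disjoint from $Z$'' is also unjustified, since nothing controls the supports of $Y$ and $Z$ relative to each other.

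What the $\alpha\neq 0$ case is actually supposed to deliver is the contradiction with $\delta>0$ itself, and this is the step your proposal never executes. Since $S$ is strictly singular, pass to a further block subspace $V\subset W$ on which $\norm[S|_V]<\varepsilon$. For a normalized $v\in V$ one has $\norm[T_0 v-\alpha v]<\varepsilon$, hence $T_0 v/\norm[T_0 v]$ is within $O(\varepsilon/|\alpha|)$ of $(\operatorname{sgn}\alpha)\,v$. Undoing the small perturbations, $v$ (respectively $-v$) is close to a normalized vector of $Y$ and $T_0 v/\norm[T_0 v]$ is close to a normalized vector of $Z$; taking $\varepsilon$ small enough beats $\delta$, which is the contradiction. (Equivalently, run the argument forward without contradiction: $\alpha\neq 0$ plus smallness of $S$ on a further subspace directly produces, for each $\varepsilon>0$, normalized $y\in Y$, $z\in Z$ with $\norm[y-z]<\varepsilon$.) This is the intended — and in the paper essentially the only stated — ingredient: the isomorphism forces $\alpha\neq 0$, and then the strictly singular perturbation can be shrunk away on a further subspace. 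Your write-up has the scaffolding but replaces this decisive step with an appeal to tightness by support that does not go through.
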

\begin{proof}[Proof of Theorem \ref{id}] By Remark \ref{unc} we can assume that the basis of $X$ is 1-unconditional and the sequence $(x_n)$ is normalized. Passing to a further subspace by Theorem \ref{th1} we can assume that $T|_{[x_n]}=D|_{[x_n]}+S$ with $D$ bounded diagonal with entries $(\lambda_n)$ and $S$ compact.  Let $\Lambda=\sup_n|\lambda_n|$ and assume $\Lambda >0$. 

We shall prove the following
\begin{claim}\label{claim1}
For any $\e>0$ in any block subspace of $[x_n]$ there is a further block subspace $[y_m]$ and some $\alpha_\e$ with $|\alpha_\e|\leq\Lambda$ such that 
$$
\norm[(D-\alpha_\e Id)|_{[y_m]}]<\e.
$$
\end{claim}
Having Claim \ref{claim1} consider a cluster point $\alpha_0$ of $(\alpha_\e)_{\e>0}$ and pick some sequence $(\alpha_n)$ and descending sequence of block subspaces $Y_n$ such that $\vvert[\alpha_n-\alpha_0]<1/2^n$ and $\norm[(D-\alpha_n Id)|_{Y_n}]<1/2^n$. Thus $\norm[(D-\alpha_0 Id)|_{Y_n}]<1/2^{n-1}$ and on the diagonal subspace $Y_0$ of $(Y_n)$ the operator $(D-\alpha_0 Id)|_{Y_0}$ is compact which will finish the proof.

Proof of Claim \ref{claim1}. Fix $\e>0$  and  consider a partition  of $\{\lambda: \ |\lambda|\leq \Lambda\}=\cup_{i=1}^{d}A_i$ into pairwise disjoint subsets of diameter smaller than $\e/2$. For every $n$ put $I_{n,i}=\{k\in\supp x_n: \lambda_{k}\in A_i\}$ and $x_{n,i}=x_{n}{|}_{I_{n,i}}$. By the unconditionality  we get $\norm[x_{n,i}]\leq 1$. As $X$ is tight by support, for every $i\ne j$ any restriction to a linear subspace spanned by a block sequence of $(x_{n,i})_n$ of the operator
$$
M_{i,j}:lin\{x_{n,i}: n\in\N\}\ni\sum_na_nx_{n,i}\to \sum_na_nx_{n,j}\in lin\{x_{n,j}: n\in\N\}
$$
is either non-bounded or strictly singular. Using this observation in any block subspace of $(x_n)$ we can find a further block sequence $(y_m)$ satisfying for some $i_{0}\leq d$ the following
$$
\norm[y_m|_{\cup_nI_{n,i_{0}}}]=1, m\in\N\,\,\, \text{ and} \,\,\,\norm[y_m|_{\cup_nI_{n,i}}]\to 0, m\to\infty \,\,\,\text{ for } i\neq i_0\,.
$$
The above statement can be easily proved by induction on $d$. Passing to a subsequence of $(y_m)$ we can assume that $\norm[P_{\N\setminus\cup_nI_{n,i_0}}|_{[y_m]}]<\e/(4\Lambda)$.

Pick any scalar $\lambda_\e\in A_{i_0}$ and compute for any vector $\sum_nb_nx_{n}\in [y_m]$ of norm 1:
\begin{align*}
&\norm[\lambda_\e\sum_{n}b_nx_{n}-D(\sum_{n}b_nx_{n})]\leq \\
&\leq\norm[\sum_nb_n\sum_{k\in I_{n,i_0}}(\lambda_\e-\lambda_k)x_n(k)e_k]+\norm[\sum_nb_n\sum_{k\not\in I_{n,i_0}}(\lambda_\e-\lambda_k)x_n(k)e_k]\\
&\leq \max_{k\in \cup_n I_{n,i_0}}|\lambda_k-\lambda_\e|\norm[\sum_nb_nx_n]+\max_k|\lambda_k-\lambda_\e|\norm[\sum_n\sum_{i\neq i_0}b_nx_{n,i}]\\
&\leq \frac{\e}{2}\norm[\sum_nb_nx_n]+2\Lambda\norm[P_{\N\setminus\cup_nI_{n,i_0}}|_{[y_m]}]\leq \e
\end{align*}
which proves that $\norm[(D-\lambda_\e Id)|_{[y_m]}]\leq \e$.

\end{proof}

Let $X$ be a Banach space with an unconditional basis $(e_i)$. In the next section we shall use the following general observation concerning the form of a projection on one of the component of a direct sum formed by two block subspaces with possibly coinciding supports. Assume we have block subspaces $Y=[y_n]$ and $Z=[z_n]$ with 
\begin{enumerate}
\item[(D1)] $\min\{\supp y_{n+1}, \supp z_{n+1}\}\geq \max\{\supp y_n,\supp z_n\}, \ \  n\in\N$
\item[(D2)] $\inf\{\norm[y-z]: \norm[y]=\norm[z]=1, y\in Y, z\in Z\}>0$ 
\end{enumerate}
Consider projections $P_Y: Y+Z\ni y+z\mapsto y\in Y$, $P_Z: Y+Z\ni y+z\mapsto z\in Z$. By (D2) these projections are bounded. 

\begin{lemma}\label{fact1}
In the situation as above the projection $P_Y$ is of the form $P_Y=D|_{Y+Z}+S$, with $S$ strictly singular and $D:X\to X$ diagonal if and only if there is a partition $\N=F\cup G$ such that $P_G|_Y$ and $P_F|_Z$ are strictly singular. Moreover, if either of the conditions hold, the diagonal operator $D$ can be chosen to be a projection onto a subspace spanned by a subsequence of the basis.
\end{lemma}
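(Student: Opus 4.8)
The plan is to derive both implications from a single algebraic identity together with the ideal properties of strictly singular operators. Throughout I assume, as in the rest of the section (Remark \ref{unc}), that the basis $(e_i)$ is $1$-unconditional, and I use freely that the strictly singular operators form a two-sided operator ideal, that the restriction of a strictly singular operator to a closed subspace of its domain is again strictly singular, and that $P_Y,P_Z$ are bounded (which is exactly (D2)).

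I would first treat the implication $\Leftarrow$, which also yields the last assertion. Assume $\N=F\cup G$ with $P_G|_Y$ and $P_F|_Z$ strictly singular, and set $D=P_F$. For $w=y+z$ with $y\in Y$, $z\in Z$, writing $y=P_Fy+P_Gy$ gives
\[
(P_Y-P_F)w=y-P_Fy-P_Fz=P_Gy-P_Fz=(P_G|_Y)(P_Yw)-(P_F|_Z)(P_Zw),
\]
hence $S:=(P_Y-P_F)|_{Y+Z}=(P_G|_Y)\circ P_Y-(P_F|_Z)\circ P_Z$ is strictly singular, being a difference of compositions of bounded operators with strictly singular ones. As $P_F$ is the diagonal projection onto $[e_i:\ i\in F]$, a subspace spanned by a subsequence of the basis, this also settles the last assertion.

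For the implication $\Rightarrow$, assume $P_Y=D|_{Y+Z}+S$ with $D$ diagonal, say with entries $(\lambda_i)$, and $S$ strictly singular. Restricting this identity to $Y$ and to $Z$ and using $P_Y|_Y=Id|_Y$ and $P_Y|_Z=0$ shows that $(D-Id)|_Y=-S|_Y$ and $D|_Z=-S|_Z$ are strictly singular. I would then put $F=\{i\in\N:\ |\lambda_i|\ge 1/2\}$ and $G=\N\setminus F$. For $i\in G$ we have $|\lambda_i-1|\ge 1/2$, so the diagonal operator $J$ on $[e_i:\ i\in G]$ with entries $(\lambda_i-1)^{-1}$ has norm at most $2$ (here $1$-unconditionality is used); since $D$, being diagonal, commutes with $P_G$, we obtain
\[
P_G|_Y=J\circ P_G\circ(D-Id)|_Y=-J\circ P_G\circ S|_Y,
\]
which is strictly singular. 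Symmetrically, for $i\in F$ the diagonal operator $K$ on $[e_i:\ i\in F]$ with entries $\lambda_i^{-1}$ has norm at most $2$, and $P_F|_Z=K\circ P_F\circ D|_Z=-K\circ P_F\circ S|_Z$ is strictly singular; thus $\N=F\cup G$ is the required partition, and feeding it into the implication $\Leftarrow$ gives the final assertion again.

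The argument is essentially bookkeeping, so I do not expect a genuine obstacle. The one point deserving care is the choice of the threshold $1/2$ in the partition: it must simultaneously force $D-Id$ to be boundedly invertible on $[e_i:\ i\in G]$ and $D$ to be boundedly invertible on $[e_i:\ i\in F]$, and it is precisely $1$-unconditionality of the basis that converts the entrywise lower bounds $|\lambda_i-1|\ge 1/2$ (resp.\ $|\lambda_i|\ge 1/2$) into the required operator-norm bounds.
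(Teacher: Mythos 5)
Your proof is correct and follows essentially the same approach as the paper: the reverse implication uses the identical decomposition $P_Y = P_F|_{Y+Z} + (P_G|_Y)\circ P_Y - (P_F|_Z)\circ P_Z$, and the forward implication uses the same $1/2$-threshold partition of $\N$ by the size of $|\lambda_i|$. The only cosmetic difference is in the forward direction, where you package the key inequality as a composition with an explicit bounded diagonal left inverse $J$ (resp.\ $K$) and invoke the ideal property, whereas the paper writes out the pointwise lower bound $\norm[(P_G\circ S)(\cdot)]\ge\frac12\norm[P_G(\cdot)]$ directly---the underlying estimate is the same.
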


\begin{proof}
Assume that $P_Y$ is of the form $P_Y=D|_{Y+Z}+S$, with $S$ strictly singular and $D:X\to X$ diagonal with entries $(\lambda_i)$. Let 
$$
F=\left\{i\in\N: \ |\lambda_i|>\frac{1}{2}\right\}, \ \ \ G=\left\{i\in\N: \ |\lambda_i|\leq \frac{1}{2}\right\}
$$ 
Then for any $y=\sum_na_ny_n\in Y$ we have $y=P_Yy=Dy+Sy$, so
$$
\sum_na_n\sum_{i\in\supp y_n} y_n(i)e_i=\sum_na_n\sum_{i\in\supp y_n}\lambda_i y_n(i)e_i + S(\sum_na_ny_n)
$$
Thus
$$
\sum_na_n\sum_{i\in\supp y_n}(1-\lambda_i)y_n(i) e_i=S(\sum_na_ny_n)
$$
Applying the projection $P_G$ we get
$$
\sum_na_n\sum_{i\in\supp y_n\cap G}(1-\lambda_i)y_n(i) e_i=(P_G\circ S)(\sum_na_ny_n)
$$
thus by unconditionality of $(e_i)$
\begin{align*}
\norm[(P_G\circ S)(\sum_na_ny_n)]&=\norm[\sum_na_n\sum_{i\in\supp y_n\cap G}(1-\lambda_i)y_n(i) e_i]\\
&\geq \frac{1}{2}\norm[\sum_na_n\sum_{i\in\supp y_n\cap G}y_n(i)e_i]\\
&=\frac{1}{2}\norm[\sum_na_nP_Gy_n]\\ 
&=\frac{1}{2}\norm[P_G(\sum_na_ny_n)].
\end{align*}
As $S$ is strictly singular also $P_G|_Y$ is strictly singular. Analogously we prove that $P_F|_Z$ is strictly singular.

The reverse implication follows straightforward. Given suitable $F,G$ we write $P_Y=P_F|_{Y+Z}+P_G\circ P_Y-P_F\circ P_Z$. By the assumption on projections $P_F$, $P_G$ on corresponding subspaces the operator $P_G\circ P_Y-P_F\circ P_Z$ is strictly singular. This reasoning proves also the "moreover" part of the lemma. 
\end{proof}

\section{GOWERS UNCONDITIONAL SPACE CASE}
In this section we answer Gowers' question \cite{G2} by giving an example of an operator $T$ on a subspace $W$ of Gowers unconditional space $X_U$ which is not of the form $D|_W+S$ with $D$ diagonal and $S$ strictly singular. We present first the list of canonical properties of the class of spaces of Gowers-Maurey type that are needed for our construction and proceed to the proof of the main result.  Next we generalize the construction to any block subspace of $X_U$ proving that an operator which is not a strictly singular perturbation of a restriction of a diagonal operator can be built inside any infinitely dimensional subspace of $X_U$. However, performing the construction inside block subspaces requires more technical background concerning spaces of Gowers-Maurey type, thus we state it separately for the convenience of the reader. We close the section with proving that even though the "diagonal + strictly singular" property does not hold for any infinitely dimensional subspace of the space $X_U$, it is satisfied for block subspaces of $X_U$. 

\

We recall now the definition of Schlumprecht space $S$ and Gowers unconditional space $X_U$. The spaces are defined as a completion of $c_{00}$ under suitable norm, defined as a limit of an increasing sequence of norms. 

Let $f$ denote the function $x\mapsto \log_2(x+1)$. The norm $\norm_S$ of Schlumprecht space $S$ satisfies on $c_{00}$ the following equation.
$$  
\norm[x]_S=\max\left\{\norm[x]_{\infty},\sup_{n}\frac{1}{f(n)}\sup\{\sum_{i=1}^{n}\norm[E_{i}x]_S: E_{1}<\dots<E_{n}\}\right\}.
$$
It follows straightforward that the basis $(\tilde{e}_n)$ of $S$ is 1-unconditional and subsymmetric, i.e. equivalent to any of its infinite subsequences. 

We shall sketch the definition of Gowers unconditional space $X_U$, referring to \cite{G} for details, and state properties of the space we need in a list of facts given below. 

The norm of $X_U$ satisfies on $c_{00}$ the following implicit equation
\begin{align*}
\norm[x]=\max\{\norm[x]_{\infty},&\sup_{n}\frac{1}{f(n)}\sup\{\sum_{i=1}^{n}\norm[E_{i}x]:  E_{1}<\dots<E_{n}\}, \\
  &\sup\vvert[x^*(x)]: x^* \text{ special functional of length }k, k\in K\}
\end{align*}
for some fixed infinite and co-infinite $K\subset\N$. Special functionals are described with the use of so-called coding function $\sigma$ defined on the family $\mathbf{Q}$ of finite sequences of vectors with rational coordinates with modulus at most 1, taking values in $\N\setminus K$ and satisfying certain growth condition. A special functional of length $k$ is of the form $\frac{1}{\sqrt{f(k)}}\sum_{j=1}^kx_j^*$, for some block sequence $(x_1^*,\dots,x_k^*)$ with each $x_j^*$ of the form $x_j^*=\frac{1}{f(n_j)}\sum_{n=1}^{n_j}x_{j,n}^*$, where $(x_{j,1}^*, \dots, x_{j,n_j}^*)$ is a block sequence in $\textbf{Q}$ of vectors with norm at most 1, and  $n_{j+1}=\sigma(|x_1^*|, \dots, |x_j^*|)$ for any $j=2,\dots,k$. 

Recall that in case of Gowers-Maurey space the coding function depends on $(x_1^*, \dots, x_j^*)$, not on $(|x_1^*|, \dots, |x_j^*|)$, which  makes the space hereditarily indecomposable. In case of Gowers space the basis is 1-unconditional, but including special functionals in the norming set forces tightness by support. 

\

The basic tools are formed by sequences of $\ell_1$-averages. A vector $x\in X_U$ is called an $\ell_{1}^{n}$-\textit{average with constant} $c\geq 1$, $n\in\N$, if  $x=\frac{1}{n}(x_{1}+\dots+x_{n})$ for some block sequence $x_1<\dots<x_n$ with $\norm[x_{i}]\leq  1$ and  $\norm[x]\geq 1/c$. A block sequence of 
$\ell_1^{n_k}$-averages $(x_k)_{k=1}^N\subset X_U$ is a \textit{rapidly increasing sequence (RIS) of $\ell_1$-averages}, if - roughly speaking - $(n_k)$ increases fast enough, with the length $n_k$ of average $x_k$ depending not only on $k$, but also on the support of $x_{k-1}$, and the length $N$ of the sequence is small with respect to the length $n_1$ of the first average, with all relations described in terms of the function $f$. 

\

We list now the properties of the space $X_U$ needed in the sequel. This list indicates that the results of this section can be easily adapted to the case of other spaces of Gowers-Maurey type. 

First we recall the standard
\begin{fact}[(Lemma 1 \cite{G})]\label{fact4}
For any $n\in\N$ and $c>1$ every block subspace of $X_U$ contains an $\ell_1^n$-average with constant $c$.
\end{fact}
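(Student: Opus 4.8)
The statement to prove is Fact \ref{fact4}: every block subspace of $X_U$ contains an $\ell_1^n$-average with constant $c$, for any $n$ and $c>1$. This is a standard fact for spaces of Schlumprecht/Gowers--Maurey type, and the plan is to reproduce the usual argument by contradiction using the submultiplicativity of $f$.

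First I would fix a block subspace $Y\subset X_U$, fix $n\in\N$ and $c>1$, and suppose towards a contradiction that $Y$ contains no $\ell_1^n$-average with constant $c$. Choose $N$ large (to be specified) and, using that $c_{00}$ is dense, pick a normalized block sequence $y_1<y_2<\dots<y_{n^N}$ in $Y$. Group them into $n$ consecutive blocks of $n^{N-1}$ vectors each and average each block: set $z_j=\frac{n}{n^N}\sum_{i\in B_j}y_i$ where $B_1<\dots<B_n$ partition $\{1,\dots,n^N\}$ into intervals of length $n^{N-1}$. Each $z_j$ is a normalized-up-to-scalar average of block vectors; the key point is that since $Y$ contains no $\ell_1^n$-average with constant $c$, every such average $\frac1n\sum_{i=1}^n u_i$ of normalized blocks from $Y$ has norm $<1/c$. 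Iterating this observation across the $N$ levels of the tree of averages, one gets $\norm[\frac{1}{n^N}\sum_{i=1}^{n^N}y_i]<c^{-N}$ (at each of the $N$ stages of halving the averaging length by a factor $n$, we lose a factor $1/c$).

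On the other hand, I would bound this same average from below using the norming functionals of $X_U$. The norm of $X_U$ dominates $\frac{1}{f(n)}\sum_{i=1}^n\norm[E_ix]$ for any admissible choice $E_1<\dots<E_n$; applying this estimate recursively $N$ times with $E_i=\ran y_i$ at the bottom level (and splitting each intermediate sum into $n$ consecutive pieces) yields
$$
\norm[\tfrac{1}{n^N}\textstyle\sum_{i=1}^{n^N}y_i]\geq \frac{1}{f(n)^N}\cdot\frac{1}{n^N}\sum_{i=1}^{n^N}\norm[y_i]=\frac{1}{f(n)^N}\,,
$$
using $\norm[y_i]=1$. Comparing the two bounds gives $f(n)^{-N}<c^{-N}$, i.e. $f(n)<c$ — but $n$ was arbitrary and $f(n)=\log_2(n+1)\to\infty$, so for $n$ large enough (or rather: we should have fixed $n$ with $f(n)\geq c$ from the start, and the contradiction is immediate) this is absurd. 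Hence for those $n$ the block subspace contains the required average. To handle \emph{all} $n$, note that an $\ell_1^n$-average with a good constant can be manufactured from an $\ell_1^m$-average with $m\gg n$ by splitting it into $n$ nearly equal blocks and rescaling — the resulting vector is an $\ell_1^n$-average with constant close to that of the original, so passing from large $m$ to arbitrary $n$ costs only an arbitrarily small deterioration of the constant, which we absorb by choosing $m$ so that $f(m)$ is much larger than the target $c$.

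The main obstacle is purely bookkeeping: making the recursive lower bound rigorous requires checking that at each level the partition into $n$ blocks is admissible (the $E_i$'s are successive intervals of $\N$, which is automatic since the $y_i$'s form a block sequence) and keeping track of the constants so that the factor lost at each of the $N$ levels is genuinely $1/f(n)$ on the lower side and genuinely $1/c$ on the upper side. There is no real subtlety beyond this; the only place one must be slightly careful is the final reduction from large $m$ to arbitrary $n$, where one should record that splitting an $\ell_1^m$-average into $\lfloor m/n\rfloor$-long blocks and normalizing produces an $\ell_1^n$-average whose constant is within a factor tending to $1$ of the original as $m/n\to\infty$.
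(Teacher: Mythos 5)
The paper does not prove this statement; it is quoted as Lemma~1 of \cite{G}, so what follows is a review of your argument on its own terms. The overall strategy (suppose no $\ell_1^n$-average with constant $c$ exists, build an $n$-ary tree of averages of depth $N$, compare a geometric upper bound with a lower bound from the lower $f$-estimate of the norm) is exactly the standard route, and the upper bound $\bigl\|\frac{1}{n^N}\sum_{i=1}^{n^N} y_i\bigr\|<c^{-N}$ obtained by iterating the no-average hypothesis $N$ times is correct.

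The gap is in the lower bound and in the final comparison. Applying the estimate $\|x\|\geq \frac{1}{f(n)}\sum_{i=1}^{n}\|E_i x\|$ recursively $N$ times does give $\bigl\|\frac{1}{n^N}\sum y_i\bigr\|\geq f(n)^{-N}$, but this bound decays \emph{exponentially} in $N$, exactly like the upper bound, so comparing the two yields only $f(n)^{-N}<c^{-N}$, i.e.\ $f(n)>c$ (note the inequality goes the other way from what you wrote: $a^{-N}<b^{-N}$ forces $a>b$, not $a<b$). This is a contradiction only when $f(n)\leq c$, which for $c$ close to $1$ fails already at $n=2$ since $f(2)=\log_2 3>1.58$; so the direct argument, even sign-corrected, covers no interesting $n$ when $c$ is small. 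Your concluding reduction (split an $\ell_1^m$-average into $n$ blocks to get an $\ell_1^n$-average) is valid but goes from large $m$ to small $n$, which is the wrong direction to rescue the argument once the sign is corrected.

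What makes the standard proof work is that one should \emph{not} iterate the $f$-estimate. Instead apply it a single time at the full length $n^N$:
$$
\Bigl\|\frac{1}{n^N}\sum_{i=1}^{n^N} y_i\Bigr\|\ \geq\ \frac{1}{f(n^N)}\cdot\frac{1}{n^N}\sum_{i=1}^{n^N}\|y_i\|\ =\ \frac{1}{f(n^N)}\ =\ \frac{1}{\log_2(n^N+1)}\ \geq\ \frac{1}{1+N\log_2 n}\,.
$$
This lower bound is only \emph{polynomially} small in $N$, whereas the upper bound $c^{-N}$ is exponentially small. For any fixed $n\geq 2$ and any $c>1$, choosing $N$ large enough gives $c^{-N}<\frac{1}{1+N\log_2 n}$ to be false, which is the desired contradiction — uniformly in $n$ and $c$, with no need for the final reduction. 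The decisive point you miss is that $f(n^N)\sim N\log_2 n\ll f(n)^N$; the slow (logarithmic) growth of $f$ is precisely what defeats the geometric decay.
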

We shall nees also the following simple observation.
\begin{fact}\label{fact5}
 For any sequence $(z_n)$ of $\ell_1$-averages of increasing length and a common constant and any sets $(D_n)$ in $\N$ with $\inf_n\norm[P_{D_n}z_n]>0$ also $(P_{D_n}z_n)$ is a sequence of $\ell_1$-averages of increasing length and a common constant. 
\end{fact}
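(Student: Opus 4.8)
The plan is to verify directly that each $P_{D_n}z_n$ satisfies the definition of an $\ell_1^{m_n}$-average, so that the new sequence inherits the (increasing) lengths $m_n$ of the original one and only the constant needs to be adjusted, using the hypothesis $\inf_n\norm[P_{D_n}z_n]>0$. Write $z_n=\frac1{m_n}(y^{(n)}_1+\dots+y^{(n)}_{m_n})$, where $y^{(n)}_1<\dots<y^{(n)}_{m_n}$, $\norm[y^{(n)}_i]\le 1$, $\norm[z_n]\ge 1/c$, and $(m_n)$ is strictly increasing; set $\delta=\inf_n\norm[P_{D_n}z_n]>0$.

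First I would use linearity of $P_{D_n}$ to write $P_{D_n}z_n=\frac1{m_n}\sum_{i=1}^{m_n}P_{D_n}y^{(n)}_i$ and check that the vectors $w^{(n)}_i:=P_{D_n}y^{(n)}_i$ still form a successive family, since $\supp w^{(n)}_i\subseteq\supp y^{(n)}_i$, and satisfy $\norm[w^{(n)}_i]\le\norm[y^{(n)}_i]\le 1$ because the basis of $X_U$ is $1$-unconditional. Hence $P_{D_n}z_n$ is displayed as the average of $m_n$ successive vectors of norm at most $1$.

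Next I would invoke the hypothesis: $\norm[P_{D_n}z_n]\ge\delta$ for every $n$, so with $c':=1/\delta$ every $P_{D_n}z_n$ is an $\ell_1^{m_n}$-average with constant $c'$. Since $(m_n)$ is strictly increasing, $(P_{D_n}z_n)_n$ is then a sequence of $\ell_1$-averages of increasing length and common constant $c'$, as required.

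The argument is essentially bookkeeping, so there is no genuine obstacle; the one point to handle with a word of care in the write-up is that some restricted block $w^{(n)}_i$ may vanish (when $D_n$ avoids $\supp y^{(n)}_i$). This does not disturb anything: a zero vector is vacuously successive to the remaining blocks and has norm $0\le 1$, so the decomposition above is still of the required form (alternatively one deletes the null blocks, noting they contribute nothing to the sum, and keeps $m_n$ as the nominal length of the average).
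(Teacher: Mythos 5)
Your argument is correct and is exactly the direct verification one would expect; the paper calls this a ``simple observation'' and omits a proof entirely, so there is no competing argument to compare against. For the record, your proof is even a bit more general than needed: the common constant $c$ of the original sequence $(z_n)$ plays no role --- you only use the decomposition $z_n=\frac1{m_n}\sum_i y^{(n)}_i$ with $\norm[y^{(n)}_i]\le 1$, 1-unconditionality to get $\norm[P_{D_n}y^{(n)}_i]\le 1$, and the lower bound $\delta=\inf_n\norm[P_{D_n}z_n]>0$ to get the new constant $c'=1/\delta$ (note $\delta\le 1$ automatically, so $c'\ge 1$ as required by the definition). Your remark on vanishing blocks is the right caveat and is handled correctly under the standard convention that $\supp 0=\emptyset$ is comparable to every set.
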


We state now the canonical property of the space $X_U$, whose variations in different spaces of Gowers-Maurey type or Argyros-Deliyanni type are responsible for the irregular properties of the spaces, such as having a small (in different meanings) family of bounded operators. 

\begin{fact}\label{fact2} 
(a) Fix a seminormalized block sequence $(u_n)\subset X_U$ and a seminormalized block sequence $(v_n)\subset X_U$ of $\ell_1^n$-averages with a constant $c$, satisfying $\supp v_n\cap \supp u_m=\emptyset$, for all $n,m\in\N$. Then there are sequences $(w_k)\subset [u_n]$, $(z_k)\subset [v_n]$ of the form $w_k=\sum_{n\in J_k}a_nu_n$, $z_k=\sum_{n\in J_k}a_nv_n$, such that $\norm[w_k]=1$, $k\in\N$, and $z_k\to 0$, as $k\to\infty$. 

(b) Fix a subsequence $(e_{i_n})$ of the basis of $X_U$. Then there is a normalized sequence $(w_k)\subset [e_{i_n}]$, $w_k=\sum_{n\in J_k}a_ne_{i_n}$, such that $\sup_{(j_n)}\norm[\sum_{n\in J_k}a_ne_{j_n}]\to 0$, as $k\to\infty$, where the supremum is taken over all sequences $(j_n)\subset \N$ with $\{j_n: n\in \N\}\cap \{i_n: n\in \N\}=\emptyset$. 
\end{fact}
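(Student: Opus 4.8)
The plan is to deduce both parts from Fact~\ref{fact4} together with the standard rapidly increasing sequence (RIS) machinery of spaces of Gowers--Maurey type; the guiding intuition is that an $\ell_1$-average with constant close to $1$ has large norm only because of a matching special functional supported on it, and this advantage is lost when the vector is transferred to a disjoint set of coordinates, so that the transferred vector ends up being controlled by the segment-averaging part of the norm alone and is therefore small. For part~(a) I would first use Remark~\ref{unc} and a rescaling to assume that the basis is $1$-unconditional and $(u_n)$, $(v_n)$ are normalized, so each $v_n$ is an $\ell_1^{m_n}$-average with a common constant $c'$; then, passing to a common subsequence of $(u_n)$ and $(v_n)$ and using that a subsequence of a sequence of $\ell_1$-averages with fast enough increasing lengths is an RIS, I would produce $N_k\to\infty$ and successive finite blocks of indices $J_1<J_2<\dots$ such that each $(v_n)_{n\in J_k}$ is an RIS of $\ell_1$-averages with constant $2c'$ and $\lvert J_k\rvert\geq N_k$. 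By Fact~\ref{fact4} and the usual construction of long $\ell_1$-averages with good constant carried out inside $[u_n:\ n\in J_k]$, I would obtain for each $k$ a vector $w_k=\sum_{n\in J_k}a_n u_n$ that is an $\ell_1^{N_k}$-average with constant $1+1/k$, and normalize so that $\|w_k\|=1$. Setting $z_k=\sum_{n\in J_k}a_n v_n$, the basic inequality for $X_U$ applied to the RIS $(v_n)_{n\in J_k}$ bounds $\|z_k\|$ by a constant multiple of the Schlumprecht norm $\|\sum_{n\in J_k}\lvert a_n\rvert\tilde{e}_n\|_S$ plus a contribution from special functionals compatible with this RIS; one then checks that the first term is small because $w_k$ is a genuine long $\ell_1$-average, and that the second term stays small because every special functional entering it is supported on $\cup_n\supp v_n$, away from the special functional responsible for the normalization of $w_k$.

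Part~(b) runs on the same scheme with $[e_{i_n}]$ in place of $[u_n]$: I would take $w_k=\sum_{n\in J_k}a_n e_{i_n}$ a normalized $\ell_1^{N_k}$-average with constant $1+1/k$ from Fact~\ref{fact4}, and for an arbitrary subsequence $(e_{j_n})$ of the basis disjoint from $(e_{i_n})$ observe that $z_k=\sum_{n\in J_k}a_n e_{j_n}$ has the same coefficients, so its $\ell_\infty$-norm and the segment-averaging part of its norm are small once $N_k$ is large, with a bound not depending on the positions $j_n$. The only way $\|z_k\|$ could be large is through a special functional supported on $\cup_n\supp e_{j_n}$, and this is excluded, uniformly in $(j_n)$, by the rigidity of special functionals forced by the coding function $\sigma$ (its values lie in $\N\setminus K$ and it has near injectivity together with strong growth), which so severely restricts the special functionals that can be matched to the $\ell_1$-average structure of $z_k$ that a suitable choice of $N_k$ and of the supports of the pieces of $w_k$ makes $\sup_{(j_n)}\|z_k\|\to 0$.

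I expect the main obstacle to be precisely the last norm estimate in each part, i.e. the control of the special-functional contribution to $\|z_k\|$. The averaging (Schlumprecht) part of the estimate is routine once the RIS is in place; the delicate point is to show that, although $w_k$ owes its normalization to a special functional supported on $\cup_n\supp u_n$ (resp. $\cup_n\supp e_{i_n}$), no special functional supported on the disjoint set $\cup_n\supp v_n$ (resp. $\cup_n\supp e_{j_n}$, uniformly over $(j_n)$) can be matched to the corresponding pieces well enough to prevent $z_k$ from being governed by the averaging part alone. This is exactly where the disjointness of supports, i.e. the tightness-by-support mechanism of $X_U$, enters, and it can be organized as an adaptation of the basic-inequality lemmas of \cite{G} to the present transfer situation.
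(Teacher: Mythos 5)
Your high-level intuition is right --- $w_k$ should be a vector whose norm is $1$ only because a matching special functional is present, so that transporting its coefficients onto a disjoint support, where no compatible special functional lives, collapses it. But realizing $w_k$ as a normalized $\ell_1^{N_k}$-average with constant $1+1/k$ (via Fact~\ref{fact4}) does not achieve this, and the step ``the first term is small because $w_k$ is a genuine long $\ell_1$-average'' is a genuine gap. An $\ell_1^n$-average with constant close to $1$ is \emph{not} normalized by a special functional: Fact~\ref{fact4} already holds in Schlumprecht space, and the vectors it produces owe their norm to the averaging (Schlumprecht) part of the norm. Such a $w_k=\sum_{n\in J_k}a_nu_n$ typically has $\|\sum_{n\in J_k}|a_n|\tilde e_n\|_S$ of order $1$, and nothing in your argument forces it to tend to $0$. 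This is fatal in part~(b): since $\|\sum_{n\in J_k}a_n e_{j_n}\|_{X_U}\geq\|\sum_{n\in J_k}a_n\tilde e_n\|_S$ for every injective $(j_n)$ (the Schlumprecht norm is subsymmetric and is a lower bound for the $X_U$-norm), your $z_k$ stays bounded below uniformly in $(j_n)$. And for part~(a), with the special-functional term dropped your argument would prove the analogue of Fact 2(a) in Schlumprecht space $S$, which is false because $S$ is complementably minimal and certainly not tight by support.

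The paper instead takes
\[
w_k=\frac{\sqrt{f(k)}}{k}\sum_{j=1}^{k}\frac{f(n_j)}{n_j}\sum_{n\in C_j}u_n,\qquad \#C_j=n_j,\ \ \min C_j>n_j,
\]
with the lengths $n_j$ dictated by the coding function $\sigma$, so that $\frac{1}{\sqrt{f(k)}}\sum_{j=1}^k\frac{1}{f(n_j)}\sum_{n\in C_j}u_n^*$ is a genuine special functional of length $k$ norming $w_k$; this gives $\|w_k\|\geq 1$, while the Schlumprecht size of the coefficient sequence of $w_k$ is only of order $1/\sqrt{f(k)}$. Transporting the coefficients onto the disjointly supported RIS $(v_n)$ (or onto $(e_{j_n})$ in part~(b)) removes the matching special functional, and the standard RIS estimate yields $\|z_k\|\leq\e(k,c)\to 0$. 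In short, $\ell_1$-averages with good constant are large intrinsically and remain large under such a transfer; the $w_k$ you need is intrinsically small and is made large only by the codebook, which is exactly what the disjoint transfer destroys.
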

The proof of (a) follows directly the lines of the proof in \cite{G} of the fact that the space $X_U$ satisfies assumptions of Lemma 9 \cite{G}. First we pass to an infinite set $N\subset \N$ such that any finite subsequence $(v_{k_1}, \dots,v_{k_N})$ of $(v_n)_{n\in N}$ with $k_1>N$  forms a RIS of $\ell_1$-averages. Now it suffices to take  for any $k\in\N$ a special functional of length $k$ of the form 
$$
\frac{1}{\sqrt{f(k)}}\sum_{j=1}^k\frac{1}{f(n_j)}\sum_{n\in C_j}u_n^*
$$
where $u_n^*(u_n)=1$, $\# C_j=n_j$, $\min C_j>n_j$ and the corresponding vector 
$$
w_k=\frac{\sqrt{f(k)}}{k}\sum_{j=1}^k\frac{f(n_j)}{n_j}\sum_{n\in C_j}u_n.
$$
Then for any $(v_n)$ as above we have 
$$
\norm[\frac{\sqrt{f(k)}}{k}\sum_{j=1}^k\frac{f(n_j)}{n_j}\sum_{n\in C_j}v_n]\leq\e(k,c),
$$ 
for some $\e(k,c)\to 0$ as $k\to\infty$. 

In case of subsequences of the basis the proof is even simplified. 

\

Recall that Facts \ref{fact1} and \ref{fact2} imply immediately the following
\begin{theorem}\cite{FR1,G2}
 The unit vector basis of Gowers unconditional space $X_U$ is tight by support. 
\end{theorem}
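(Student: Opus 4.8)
The plan is to deduce tightness by support of the basis of $X_U$ directly from Facts \ref{fact4} and \ref{fact2}(b), arguing by contradiction. Suppose there were two infinitely dimensional subspaces $Y,Z\subset X_U$ with disjoint supports and an isomorphism $T:Y\to Z$. Using a standard gliding-hump argument together with the unconditionality of the basis, I would first reduce to the case that $Y=[y_n]$ and $Z=[z_n]$ are block subspaces with $T$ mapping $y_n$ to something close to $z_n$; more precisely, passing to subsequences one may assume $\supp y_n\cap\supp z_m=\emptyset$ for all $n,m$ and that $(z_n)$ is a seminormalized block sequence while $(y_n)$ is seminormalized. To bring the $\ell_1$-average structure of Fact \ref{fact2}(a) into play I would, using Fact \ref{fact4}, refine $(y_n)$ to a block sequence of $\ell_1^{m_n}$-averages with a common constant $c$ and increasing lengths (shrinking $(z_n)$ correspondingly along the same index blocks, which by unconditionality keeps the isomorphism bounded).

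Next I would apply Fact \ref{fact2}(a) with $(u_n)=(z_n)$ and $(v_n)=(y_n)$ (the hypotheses are exactly disjointness of supports plus $(v_n)$ being a seminormalized block sequence of $\ell_1$-averages with a common constant). This produces vectors $w_k=\sum_{n\in J_k}a_nz_n\in Z$ and the parallel vectors $\sum_{n\in J_k}a_ny_n\in Y$ with $\norm[w_k]=1$ for all $k$ but $\norm[\sum_{n\in J_k}a_ny_n]\to 0$. Since $T^{-1}:Z\to Y$ is bounded, and $T^{-1}w_k$ is (up to the reduction above) comparable to $\sum_{n\in J_k}a_ny_n$, we get $\norm[T^{-1}w_k]\to 0$ while $\norm[w_k]=1$, contradicting boundedness of $T$. (If one prefers to work with subsequences of the basis rather than general block subspaces, the cleaner route is to first note, by a perturbation/gliding-hump argument, that any infinite-dimensional subspace contains a sequence equivalent to a block sequence, hence it suffices to rule out block subspaces; alternatively Fact \ref{fact2}(b) handles the special case $Y\subset[e_{i_n}]$ directly, and the general block case follows the same pattern with (a) in place of (b).)

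The main obstacle is the reduction step: Fact \ref{fact2}(a) requires $(v_n)$ to be a block sequence of $\ell_1$-averages with a \emph{common} constant and with the disjoint-support hypothesis, so I must verify that an arbitrary isomorphism between disjointly supported subspaces can be massaged into this normal form without destroying either the isomorphism bounds or the disjointness. This is where one uses: (i) Remark \ref{unc}, so the basis is unconditional and block-perturbation arguments are legitimate; (ii) a standard argument that passing to an equivalent block sequence inside each subspace and matching index sets preserves the (two-sided) bounds on $T$; and (iii) Fact \ref{fact4} to extract $\ell_1$-averages of prescribed lengths inside the block subspace $Y$, with Fact \ref{fact5} ensuring that any further disjoint restrictions forced by matching supports still leave us with $\ell_1$-averages of a common constant. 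Once the normal form is in place, the contradiction from Fact \ref{fact2}(a) is immediate, so the remainder of the argument is routine.

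\begin{proof}
By Remark \ref{unc} the basis is unconditional. Suppose towards a contradiction that $Y,Z\subset X_U$ are infinitely dimensional subspaces with $\supp Y\cap\supp Z=\emptyset$ and $T:Y\to Z$ is an isomorphism onto its image. A standard gliding-hump argument shows that every infinitely dimensional subspace of $X_U$ contains a normalized sequence $2$-equivalent to a normalized block sequence; applying this inside $Y$ and then inside the corresponding image, and using unconditionality to absorb small perturbations, we may pass to block subspaces and assume $Y=[y_n]$, $Z=[z_n]$ are block subspaces with $(y_n)$, $(z_n)$ seminormalized, $\supp y_n\cap\supp z_m=\emptyset$ for all $n,m$, and $T y_n=z_n$ for all $n$ (after rescaling), with $T,T^{-1}$ bounded. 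By Fact \ref{fact4} we may further refine $(y_n)$ so that it is a block sequence of $\ell_1^{m_n}$-averages with a common constant $c$ and $m_n\to\infty$, replacing $(z_n)$ by the block sequence of the same index blocks; by unconditionality this keeps $T,T^{-1}$ bounded and keeps the supports disjoint.

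Now apply Fact \ref{fact2}(a) with $(u_n)=(z_n)$ and $(v_n)=(y_n)$. We obtain index sets $(J_k)$ and scalars $(a_n)$ so that $w_k:=\sum_{n\in J_k}a_n z_n$ has $\norm[w_k]=1$ for all $k$, while $v_k:=\sum_{n\in J_k}a_n y_n$ satisfies $\norm[v_k]\to 0$. Since $T$ is an isomorphism with $T v_k=w_k$, boundedness of $T$ gives $1=\norm[w_k]=\norm[Tv_k]\leq\norm[T]\,\norm[v_k]\to 0$, a contradiction. Hence no two disjointly supported infinitely dimensional subspaces of $X_U$ are isomorphic, i.e. the basis of $X_U$ is tight by support.
\end{proof}
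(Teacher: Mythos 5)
Your proof is correct and is precisely the argument the paper indicates when it says tightness by support follows ``immediately'' from its Facts (the reference to Lemma~\ref{fact1} in that sentence is evidently a typo for Fact~\ref{fact4}): reduce to disjointly supported block sequences, use Fact~\ref{fact4} to turn one side into a block sequence of $\ell_1$-averages with a common constant, and apply Fact~\ref{fact2}(a) to contradict boundedness of the isomorphism. The only cosmetic slip is that Fact~\ref{fact2}(a) asks for the $n$-th term to be an $\ell_1^n$-average (length exactly $n$) rather than an $\ell_1^{m_n}$-average for some $m_n\to\infty$; this is easily arranged since Fact~\ref{fact4} lets you extract averages of any prescribed length from any block subspace, so simply choose the lengths to match.
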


The next fact allows for transfer of an example of a sequence needed in Theorem \ref{gow2} from Schlumprecht space to Gowers unconditional space. Recall that a~basic sequence $(x_n)$ generates some subsymmetric basic sequence $(\tilde{x}_n)$ as a~\textit{spreading model}, if for any $(a_i)_{i=1}^k$, $k\in\N$, we have
$$
\lim_{n_1\to\infty}\lim_{n_2\to\infty}\dots\lim_{n_k\to\infty}\norm[\sum_{i=1}^ka_ix_{n_i}]=\norm[\sum_{i=1}^ka_i\tilde{x}_i]\,.
$$
We say that a basic sequence generates an $\ell_1$-spreading model, if it generates the unit vector basis of $\ell_1$ as a spreading model.
\begin{fact}\label{fact3}
 The basis of $X_U$ generates the basis of Schlumprecht space as a spreading model. 
\end{fact}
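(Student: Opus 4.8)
The plan is to compare the defining implicit norm equations of $X_U$ and of Schlumprecht space $S$ when evaluated on very spread-out finitely supported vectors, and to show that in the limit the contribution of the special functionals disappears. Fix scalars $(a_i)_{i=1}^k$ and consider, for $n_1<n_2<\dots<n_k$ the vectors $\sum_{i=1}^k a_i e_{n_i}\in X_U$ and $\sum_{i=1}^k a_i\tilde e_i\in S$. Since the basis of $S$ is subsymmetric, $\norm[\sum_{i=1}^k a_i\tilde e_i]_S$ does not depend on which indices we place the vectors at, so it suffices to show that $\lim_{n_1\to\infty}\cdots\lim_{n_k\to\infty}\norm[\sum_i a_i e_{n_i}]_{X_U}$ exists and equals $\norm[\sum_i a_i\tilde e_i]_S$. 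The key point is that the $\sup_\infty$-part and the $f(n)$-averaging part of the two norm equations are literally the same, so the only discrepancy is the special-functional term in the $X_U$ norm.

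First I would bound the norm from below: since the norming set of $X_U$ contains all functionals built from the averaging operation (the same ones that norm $S$), one has $\norm[\sum_i a_i e_{n_i}]_{X_U}\ge \norm[\sum_i a_i \tilde e_{n_i}]_S = \norm[\sum_i a_i\tilde e_i]_S$ for every choice of $n_1<\dots<n_k$, with no limit needed. For the upper bound I would analyze an arbitrary norming functional $x^*$ for $\sum_i a_i e_{n_i}$. If $x^*$ comes from the averaging part, it is already dominated (up to the $1/f(n)$ factors) by the $S$-norm after collapsing $x^*$ to the supports of the $e_{n_i}$'s. If $x^*$ is a special functional of length $m$, $x^* = \frac{1}{\sqrt{f(m)}}\sum_{j=1}^m x_j^*$ with $x_j^* = \frac{1}{f(n_j)}\sum_n x_{j,n}^*$, I would argue that by choosing the gaps $n_{i+1}/n_i$ to grow fast enough relative to the lengths appearing in the coding function $\sigma$, each individual $e_{n_i}$ meets the support of at most one of the pieces $x_j^*$ in a controlled way, and the rapidly-increasing structure forced on the lengths $n_j$ by $\sigma$ makes $\vvert[x^*(\sum_i a_i e_{n_i})]$ no larger than what the averaging part already gives — this is the standard RIS-type estimate, e.g. the argument behind the $\e(k,c)\to 0$ bound quoted after Fact \ref{fact2}. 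Iterating the limits $n_k\to\infty$, then $n_{k-1}\to\infty$, etc., lets the special-functional contribution be absorbed, and one is left exactly with the $S$-norm.

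I would organize the limit argument as a diagonal/successive-limit extraction: for each fixed outer configuration the inner quantity is bounded (by $\sum|a_i|$) and monotone-ish enough that the iterated limit exists; the lower bound shows it is $\ge\norm[\sum a_i\tilde e_i]_S$, and the special-functional estimate shows the limsup is $\le\norm[\sum a_i\tilde e_i]_S$. Hence the iterated limit equals $\norm[\sum_i a_i\tilde e_i]_S$, which is precisely the assertion that $(e_n)$ generates the basis of $S$ as a spreading model.

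The main obstacle is the bookkeeping for the special functionals: one must verify that no matter how the coding function $\sigma$ assigns the lengths $n_j$, spreading the $k$ basis vectors far enough apart (in an order of limits that respects how the later indices can be chosen after the earlier ones are fixed) kills the special-functional term. This is exactly the mechanism that makes $X_U$ a space "of Gowers--Maurey type" behave like $S$ on spread-out vectors, and it is where the bulk of the technical work (all essentially already present in \cite{G}) lies; I would cite the relevant estimate from \cite{G} rather than reprove it.
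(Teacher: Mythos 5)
Your overall two-step strategy is the right one and matches what the paper implicitly relies on: a no-limit lower bound $\norm[\sum_i a_i e_{n_i}]_{X_U}\ge\norm[\sum_i a_i\tilde e_i]_S$ coming from the fact that the averaging part of the norming set of $X_U$ contains the norming set of $S$, and an upper bound obtained by showing that special functionals contribute nothing extra in the iterated limit. That part is fine.

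The gap is in your justification of the upper bound. You invoke ``the standard RIS-type estimate, e.g.\ the argument behind the $\e(k,c)\to 0$ bound quoted after Fact \ref{fact2}'', but that estimate is not the right tool here. The quantity $\e(k,c)$ controls the norm of a special vector evaluated against a block sequence $(v_n)$ of $\ell_1^n$-averages whose lengths increase rapidly (a RIS), and the mechanism is precisely that the averaging weight $f(n_j)/n_j$ beats the internal $\ell_1$-structure of $v_n$ when $n$ is large. Single basis vectors $e_{n_i}$ are only $\ell_1^1$-averages; they do not form a RIS in the relevant sense, so that lemma degenerates and gives no useful bound when you plug in $v_n=e_{j_n}$. (Fact \ref{fact2}(b) is also not what you want: it produces, for a given subsequence of the basis, a coefficient sequence that collapses on a disjoint subsequence — it is the construction of a norming special functional, not an estimate saying that arbitrary special functionals are negligible on spread-out basis vectors.) The actual argument you need — that for indices sufficiently spread out, every functional in the (recursively defined) norming set of $X_U$, in particular every special functional and every functional containing special functionals at deeper levels of its tree analysis, evaluates $\sum_i a_i e_{n_i}$ at no more than $\norm[\sum_i a_i\tilde e_i]_S+\e$ — has a different mechanism. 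It rests on the growth/injectivity of the coding function $\sigma$, which forces the trailing pieces $x_j^*$ ($j\ge 2$) of a special functional to carry tiny weights $1/f(n_j)$ on any fixed finite set of coordinates, combined with the $1/\sqrt{f(m)}$ prefactor; the nontrivial work is propagating this through the recursive tree structure of the norming set. That argument is exactly the content of Proposition 3.1 of Androulakis--Schlumprecht \cite{AS}, which is what the paper cites (the paper's own ``proof'' is just this citation). You point generically at \cite{G}, but you should be citing \cite{AS}, and the specific estimate you name is not the one that does the work.
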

The proof of this fact follows the lines of the proof of Prop. 3.1. of \cite{AS} of the above result in case of Gowers-Maurey space.

\

Now we are ready to prove the main result.
\begin{theorem}\label{gow2}
 There are block subspaces $Y=[y_n]$, $Z=[z_n]$ in Gowers unconditional space $X_U$ satisfying (D1), (D2) and 
\begin{enumerate}
 \item[(D3)] for any partition $F\cup G=\N$ with $P_F|_Z$ strictly singular the operator $P_G|_Y$ is not strictly singular.
\end{enumerate}
\end{theorem}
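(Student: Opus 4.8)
The plan is to build $Y$ and $Z$ so that the projection $P_Y$ onto $Y$ in $Y\oplus Z$ cannot be written as $D|_{Y+Z}+S$ with $D$ diagonal and $S$ strictly singular, and then invoke Lemma~\ref{fact1}, which translates exactly this failure into the assertion: there is no partition $\N=F\cup G$ with both $P_G|_Y$ and $P_F|_Z$ strictly singular. Condition (D3) is precisely the contrapositive of that statement, so (D3) is equivalent to $P_Y$ not being a strictly singular perturbation of a restriction of a diagonal operator. Hence the task reduces to producing $Y,Z$ satisfying (D1) and (D2) for which this negative statement holds.

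First I would work in Schlumprecht space $S$. By Fact~\ref{fact3} the basis of $X_U$ generates the basis of $S$ as a spreading model, so a suitable combinatorial/metric configuration found in $S$ can be transported to a block sequence in $X_U$. The combinatorial heart is the $\ell_1$-spreading model block sequence of \cite{KL} in $S$: one takes vectors $y_n$ and $z_n$ sharing the same index sets $J_n$, say $y_n=\sum_{i\in E_n}\tilde e_i$ and $z_n=\sum_{i\in E_n}\theta_i\tilde e_i$ with a carefully chosen sign/coefficient pattern $\theta_i\in\{-1,1\}$ (or chosen from a finite alphabet), arranged so that (i) both $(y_n)$ and $(z_n)$ are seminormalized and generate $\ell_1$-spreading models, giving (D2) after transfer, and (ii) for \emph{every} partition $\N=F\cup G$ of the coordinates, at least one of $P_G|_Y$, $P_F|_Z$ fails to be strictly singular — this is forced by the $\ell_1$-spreading-model behaviour combined with the oscillation of the signs $\theta_i$ across each block $E_n$, so that no coordinate set can simultaneously "see little of $Y$" and "see little of $Z$". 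Using Fact~\ref{fact5}, if a coordinate set captures a non-negligible part of each $\ell_1$-average it still captures an $\ell_1$-average, so large subspaces survive restrictions; this is the mechanism preventing strict singularity on one side of any partition.

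Then I would transfer: using Fact~\ref{fact3} pass to $n_1<n_2<\dots$ so that $(y_n)$ and $(z_n)$, reinterpreted with the $X_U$ basis on the same supports, behave (for estimates up to $\e$ on finitely many coordinates) like the Schlumprecht vectors; rescale and relabel the supports to be successive so that (D1) holds; and check (D2), i.e. $\inf\{\|y-z\|:\|y\|=\|z\|=1\}>0$, which follows because $y-z$ is supported on the same blocks with coefficients $(1-\theta_i)$ that do not vanish uniformly, so $\|y-z\|$ is bounded below via the $\ell_1$-spreading-model lower estimate. Finally I would verify (D3) directly: given any partition $F\cup G=\N$ with $P_F|_Z$ strictly singular, the spreading-model/sign structure forces $G$ to capture a seminormalized sequence of $\ell_1$-averages in $Y$ (by Fact~\ref{fact5}), whence $P_G|_Y$ restricted to their span is an isomorphism, so $P_G|_Y$ is not strictly singular.

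The main obstacle I anticipate is the combinatorial design in $S$ ensuring property (ii)/(D3) for \emph{all} partitions simultaneously while keeping both $(y_n)$ and $(z_n)$ genuine $\ell_1$-spreading-model sequences with the same supports: one needs the coefficient pattern $\theta_i$ to be "spread out enough" across each $E_n$ that no measurable/combinatorial choice of $F$ can dampen both $Y$ and $Z$, yet "structured enough" that $y_n-z_n$ stays seminormalized (for (D2)); getting these two pulls to coexist, and then certifying that the transferred $X_U$-vectors inherit exactly the needed finite-coordinate estimates via the spreading-model limit in Fact~\ref{fact3}, is the delicate part. The strict-singularity bookkeeping through Lemma~\ref{fact1} and Fact~\ref{fact5} is then routine.
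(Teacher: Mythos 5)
Your high-level framework is right: reduce (D3) to the non-existence of a ``good'' partition via Lemma~\ref{fact1}, start from the Kutzarova--Lin $\ell_1$-spreading-model block sequence in Schlumprecht space, and transfer to $X_U$ using Fact~\ref{fact3}. That is exactly the skeleton of the paper's proof. The difficulty lies where you identified it — in the design of $(z_n)$ — but the specific design you propose does not work.

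Your $z_n=\sum_{i\in E_n}\theta_i\tilde e_i$ with $\theta_i$ in $\{-1,1\}$ (or any finite set of nonzero scalars) makes $Z=D(Y)$ for a bounded, boundedly invertible diagonal operator $D:e_i\mapsto\theta_i e_i$. By $1$-unconditionality $D$ is then an isometry onto its image, so $Y$ and $Z$ are isomorphic via a diagonal map. In a tight by support space this forces $\inf\{\|y-z\|:y\in Y,\,z\in Z,\,\|y\|=\|z\|=1\}=0$ by Corollary~\ref{cor}; so \textbf{(D2) must fail} for any sign (or bounded finite-alphabet) pattern. Moreover, the mechanism you invoke for (D3) is backwards: since $P_F$ and $D$ commute, $\|P_Fz\|=\|P_F y\|$ for the corresponding $y\in Y$, $z=Dy\in Z$, so every coordinate set ``sees exactly the same amount'' of $Y$ and $Z$ — there is perfect symmetry, not the asymmetry you hope the oscillation of signs would create. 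In that regime (D3) does become vacuously true (you cannot split $Id_Y=P_F|_Y+P_G|_Y$ into two strictly singular pieces), but without (D2) the direct sum and Lemma~\ref{fact1} no longer apply, so the conclusion says nothing about operators.

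The fix is precisely to destroy the diagonal isomorphism between $Y$ and $Z$ by using coefficients that \emph{decay to zero} within each block, not signs. The paper sets $z_n(i)=4^{-\tau(i)}y_n(i)$, where $\tau$ enumerates $\supp y_n$ in decreasing order of $|y_n(i)|$. The geometric decay makes the diagonal map $y_n\mapsto z_n$ non-invertible (compact on each block), compatible with (D2); and it concentrates $z_n$ on a few ``top'' coordinates $i_{n,t}$ with small $t$, while $y_n$ spreads its $\ell_1$-mass over the blocks $v_{n,j}$. This asymmetry is what drives (D3): if $P_F|_Z$ is strictly singular, then $F$ must eventually avoid each family $\{i_{n,t}:n\}$ (Claim~\ref{claim3}, via Fact~\ref{fact2}(b)); hence $G$ contains almost all of each $\{i_{n,t}:n\}$, which is enough to manufacture $\ell_1$-averages in $P_{G}Y$ and then apply Fact~\ref{fact2}(a) to show $P_G|_Y$ is not strictly singular (Claim~\ref{claim4}). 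And (D2) is proved not by a lower bound on $\|y-z\|$ via ``$(1-\theta_i)$ doesn't vanish,'' but by observing that closeness of $Y$ and $Z$ would make $(\sum_{n\in I_k}a_ny_n)_k$ equivalent to $(\sum_{n\in I_k}a_ny_n(i_n)e_{i_n})_k$ for a selection of single coordinates, contradicting the strict singularity of the map $(y_n)\mapsto(y_n(i_n)e_{i_n})$ (Claim~\ref{claim2}, using that $(y_n)$ generates an $\ell_1$-spreading model while the basis of $X_U$ does not, via a variant of Schlumprecht's argument). Note also that $(z_n)$ need not generate an $\ell_1$-spreading model at all — that part of your requirements is unnecessary and points to the wrong picture.
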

By Lemma \ref{fact1} we obtain the following answer to Gowers' Problem 5.13 (and thus also Problem 5.12) \cite{G2}.
\begin{corollary}
There is a bounded operator on a subspace of Gowers unconditional space $X_U$ which is not a strictly singular perturbation of a restriction of a diagonal operator on $X_U$.
\end{corollary}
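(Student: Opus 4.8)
The plan is to construct, inside Schlumprecht space $S$, a block sequence on which the $\ell_1$-average structure forces the required asymmetry, then transfer it to $X_U$ via the spreading model Fact \ref{fact3} and the canonical Fact \ref{fact2}. Concretely, I would first invoke the block sequence of \cite{KL} in $S$ generating an $\ell_1$-spreading model: take a normalized block sequence $(\xi_n)$ in $S$ such that for every $(a_n)$ one has $\norm[\sum_na_n\xi_n]_S\approx \sum_n|a_n|$ along subsequences, but which simultaneously is \emph{not} equivalent to the unit vector basis of $\ell_1$ globally — this discrepancy between the spreading-model behaviour and the global behaviour is exactly what will obstruct the diagonal decomposition. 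Using Fact \ref{fact3} I would realize an analogous sequence $(y_n)$ inside a block subspace of $X_U$, and then split each $y_n$ across two disjoint "halves" of its support. That is, I want to write $\supp y_n = E_n\cup F_n$ with $E_n<F_n$ (or some fixed finite partition), set $y_n'=P_{E_n}y_n$, and then use Fact \ref{fact2}(b)/(a) together with Fact \ref{fact5} to produce a second block sequence $(z_n)$ supported on a disjoint copy of the basis but \emph{built from the same scalar coefficients}, so that $Y=[y_n]$ and $Z=[z_n]$ are isometric via the obvious correspondence.

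Next I would verify (D1) and (D2). Condition (D1) is arranged automatically by the block construction (interleaving the supports appropriately, or working with a basis reindexing so that $\supp y_n$ and $\supp z_n$ occupy alternating intervals). For (D2), the point is that $Y$ lives "on the $X_U$ side" and $Z$ on a disjoint support, and $\inf\{\norm[y-z]: \norm[y]=\norm[z]=1\}>0$ follows because a normalized difference $y_n$-type vector minus a normalized $z_n$-type vector has, on the $Y$-support, a vector of norm bounded below (one can estimate using unconditionality and the fact that the two pieces have disjoint support, so $\norm[y-z]\geq \max(\norm[P_{\supp Y}(y-z)], \ldots)$ and one of the two projections recovers nearly full norm). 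The delicate part is choosing the coefficients and the support split so that the $\ell_1$-average / RIS machinery of $X_U$ (Fact \ref{fact2}) still applies to both $(y_n)$ and $(z_n)$; this is why we pass through sequences of $\ell_1$-averages of increasing length and invoke Fact \ref{fact5} to guarantee the restricted pieces $P_{E_n}y_n$ remain $\ell_1$-averages.

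The heart of the argument is (D3). By Lemma \ref{fact1}, failure of the "diagonal + strictly singular" form for $P_Y$ is equivalent to: for every partition $\N=F\cup G$, not both $P_G|_Y$ and $P_F|_Z$ are strictly singular. Since $Y$ and $Z$ are isometric via the coefficient correspondence and have essentially "the same" support structure mirrored across the two halves, any partition that kills $Z$ via $P_F$ (making $P_F|_Z$ strictly singular) must, by symmetry of the construction, correspond on the $Y$-side to a set $G$ that retains a full $\ell_1$-average subsequence of $(y_n)$ — and on such a subsequence $P_G|_Y$ acts as a near-isometry because the $\ell_1$-average vectors $w_k=\sum_{n\in J_k}a_ny_n$ from Fact \ref{fact2}(a) have the property that deleting the coordinates in $F$ removes only a vanishing fraction of the norm (this is where the $\ell_1$-spreading-model vs. non-$\ell_1$ discrepancy of the Kislyakov–Lewis-type sequence enters: the relevant sums do \emph{not} collapse). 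Thus $P_G|_Y$ restricted to $[w_k]$ is an isomorphism, so not strictly singular, giving (D3). The main obstacle I anticipate is precisely this combinatorial/analytic bookkeeping: one must show that \emph{every} partition separating $Z$ leaves enough of $Y$, which requires pinning down how the special-functional estimates of Fact \ref{fact2}(a) interact with an arbitrary coordinate partition — handled by noting that a special functional of length $k$ evaluated on $w_k$ gives something bounded below while the same functional evaluated on the "$z$-side" vanishes, and that this survives passing to $P_G$ as long as $G$ is the half-support complementary to a set $F$ making $P_F|_Z$ strictly singular.

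Finally, the Corollary is immediate: given $Y,Z$ as in Theorem \ref{gow2}, the projection $P_Y$ on $Y+Z$ is bounded by (D2), and if it were of the form $D|_{Y+Z}+S$ with $D$ diagonal and $S$ strictly singular, Lemma \ref{fact1} would supply a partition $\N=F\cup G$ with both $P_G|_Y$ and $P_F|_Z$ strictly singular, contradicting (D3). Hence $P_Y$ is a bounded operator on the subspace $W=Y+Z$ of $X_U$ that is not a strictly singular perturbation of a restriction of any diagonal operator on $X_U$.
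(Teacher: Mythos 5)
Your plan has a fundamental structural error that would prevent the argument from getting off the ground. You propose to build $Z=[z_n]$ ``supported on a disjoint copy of the basis but built from the same scalar coefficients, so that $Y=[y_n]$ and $Z=[z_n]$ are isometric.'' But $X_U$ is tight by support, so no two disjointly supported infinite-dimensional subspaces can be isomorphic, let alone isometric; the very object you are trying to construct cannot exist. Even setting that aside, a disjoint-support arrangement trivially destroys (D3): take $F=\supp Y$ and $G=\N\setminus F\supset\supp Z$; then $P_F|_Z=0$ and $P_G|_Y=0$ are both strictly singular, so the implication in (D3) fails. The whole point of the paper's construction is the opposite: $Y$ and $Z$ must have \emph{coinciding} supports ($\supp y_n=\supp z_n$), so that no coordinate partition cleanly separates them.

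The idea you are missing is how the two sequences should differ while living on the same coordinates. In the paper, $z_n$ is a coordinatewise damped copy of $y_n$: one ranks the coordinates of each $y_n$ by size via a function $\tau$ and sets $z_n(i)=4^{-\tau(i)}y_n(i)$. The geometric damping concentrates $z_n$ on its few largest coordinates (so $Z$ behaves like a subsequence of the basis), while $y_n=\sum_j v_{n,j}$ retains the spread-out $\ell_1$-spreading-model structure of the Kutzarova--Lin sequence (\cite{KL}, not ``Kislyakov--Lewis''). Then (D2) is proved by showing that any near-equality $\norm[w-v]$ small between normalized $w\in Y$, $v\in Z$ forces $(y_n)$ to be equivalent to a one-coordinate-per-$y_n$ projection, contradicting strict singularity of that map (Claim~\ref{claim2}). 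And (D3) is proved via Claims~\ref{claim3} and \ref{claim4}: if $P_F|_Z$ is strictly singular then $F$ misses cofinitely many rank-$t$ coordinates for every $t$, whence $G$ captures enough of the $\ell_1^n$-average structure of $(y_n)$ for $P_G|_Y$ to be non--strictly singular. Your heuristic about ``killing $Z$'' versus ``retaining $Y$'' via a symmetry/duality of halves does not translate into an argument once you realize there are no disjoint halves. The final paragraph of your writeup (deducing the Corollary from Theorem~\ref{gow2} via Lemma~\ref{fact1}) is correct and matches the paper, but it rests on a construction of $Y,Z$ that your proposal does not actually supply.
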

\begin{proof}[Proof of Theorem \ref{gow2}]
We shall use the seminormalized block sequence of \cite{KL} generating an $\ell_1$-spreading model in Schlumprecht space. Recall that two vectors $u,v$ have the same distribution, if for some increasing bijection $\rho: \supp u\to\supp v$ we have $v(\rho (i))=u(i)$ for each $i\in\supp u$. Let $u_j=\frac{f(j)}{j}\sum_{i=1}^j\tilde{e}_i$. Take $(\tilde{y}_n)\subset S$ to be the block sequence of Theorem 6 of \cite{KL}, i.e. $\tilde{y}_n=\sum_{j=1}^n\tilde{v}_{n,j}$, where $(\tilde{v}_{n,j})_{j=1}^n$ have pairwise disjoint supports carefully designed and each $\tilde{v}_{n,j}$ has the same distribution as $u_{p_j}/2$, for some fixed $p_j\nearrow \infty$.  The sequence $(\tilde{y}_n)$ generates an $\ell_1$-spreading model, as $\norm[\tilde{v}_{n_1,j}+\dots+\tilde{v}_{n_p,j}]\approx p/2$ for $j\gg p$ (cf. \cite{KL}). 

Write $\tilde{y}_n=\sum_{j=1}^n\frac{f(j)}{2j}\sum_{i\in I_j}\tilde{e}_i$, $\# I_j=j$, for each $n$ and consider sequence $(y_n)\subset X_U$ defined as
$$
y_n=\sum_{j=1}^nv_{n,j}=\sum_{j=1}^n\frac{f(j)}{2j}\sum_{i\in K_j}e_i
$$
where $K_j$'s with $\# K_j=j$ are pushed forward along the basis $(e_i)$ so that by Fact \ref{fact3} the vectors $(y_n)$ form a seminormalized block sequence  with the property $\norm[v_{n_1,j}+\dots+v_{n_p,j}]\approx p/2$ for $j\gg p$, therefore also generating an $\ell_1$-spreading model. 

We define the sequence $(z_n)$ in the following way. Take a mapping $\tau:\cup_n\supp y_n\to\N$ such that 
\begin{enumerate}
 \item[(j1)] $\tau|_{\supp y_n}:\supp y_n\to \{1,2,\dots, \#\supp y_n\}$ is a bijection for any $n\in\N$,
\item[(j2)] $\tau(r)\geq \tau(s)$ iff $y_n(r)\leq y_n(s)$ for any $n\in\N$ and $r,s\in\supp y_n$.
\end{enumerate}
Notice that (j1) and (j2) implies the following property.
\begin{enumerate}
 \item[(j3)] $\tau(\supp v_{n,j})=\tau(\supp v_{m,j})$ for any $j\leq n<m$.
\end{enumerate}
Let 
$$
z_n(i)=\frac{1}{4^{\tau(i)}}y_n(i) \ \text{ for any }\  i\in\supp y_n \ \text{ and }\ z_n(i)=0 \ \text{ otherwise.}
$$
In this way we obtain two seminormalized block sequences $(y_n)$ and $(z_n)$ with $\supp y_n=\supp z_n$, thus in particular satisfying (D1). 
 
Roughly speaking, the proof of Theorem \ref{gow2} relies on the following three properties of the above sequences: for any $(i_n)\subset\N$ with $i_n\in\supp y_n$, $n\in\N$, the projection $P_{\{i_n:\ n\in\N\}}|_Y$ is strictly singular (Claim \ref{claim2}), whereas $P_{\{i_n:\ n\in\N\}}|_Z$ is not strictly singular provided $\inf_n z_n(i_n)>0$ (Claim \ref{claim3}). Moreover, the projection on the set containing supports of almost all  $(v_{n,j})_n$ for any $j$ restricted to $Y$ is not strictly singular (Claim \ref{claim4}). 

\

Proof of (D2). Assume towards contradiction that $\inf\{\norm[y-z]: \norm[y]=\norm[z]=1, y\in Y, z\in Z\}=0$. Thus there are some normalized block sequences $(w_k)\subset [y_n]$  and $(v_k)\subset [z_n]$ with $\norm[w_k-v_k]\leq 1/ 16^k$, $k\in\N$.  Thus for any $(c_k)\subset [-1,1]$ we have $\norm[\sum_kc_kw_k-\sum_kc_kv_k]\leq 1/8$. 

Take $(c_k)\subset [-1,1]$, let $w=\sum_kc_kw_k$, $v=\sum_kc_kv_k$ and $I=\{i\in \supp w: \ |w(i)|\geq 2|v(i)|\}$ and compute, using 1-unconditionality of the basis of $X$
\begin{align*}
 \frac{1}{8}\geq \norm[\sum_kc_kw_k-\sum_kc_kv_k]\geq\norm[\sum_{i\in I}(w(i)-v(i))e_i]\geq \frac{1}{2}\norm[\sum_{i\in I}w(i)e_i].
\end{align*}
Analogously compute for $J=\{i\in \supp w: \ |v(i)|\geq 2|w(i)|\}$.

Thus for any $w=\sum_kc_kw_k$ with norm 1 and $v=\sum_kc_kv_k$ we have 
$$
\norm[\sum_{i\in \supp w: \frac{1}{2}|v(i)|<|w(i)|<2|v(i)|}w(i)e_i]\geq 1/4.
$$
Let 
\begin{align*}
w_k&=\sum_{n\in I_k}a_ny_n=\sum_{n\in I_k}a_n\sum_{i\in\supp y_n}y_n(i)e_i, \\
v_k&=\sum_{n\in I_k}d_nz_n=\sum_{n\in I_k}d_n\sum_{i\in\supp y_n}4^{-\tau(i)}y_n(i)e_i.
\end{align*}
For any $i\in \supp w$ we have 
$$
\frac{1}{2}|v(i)|<|w(i)|<2|v(i)|\quad\textrm{iff}\quad \frac{1}{2}|d_n|<4^{\tau(i)}|a_n|<2|d_n|\quad \textrm{where } i\in\supp y_n.
$$ 
Given $n\in\N$ there is at most one $i\in\supp y_n$ satisfying $\frac{1}{2}|d_n|<4^{\tau(i)}|a_n|<2|d_n|$, name it by $i_n$. Hence
$$
1/4\leq\norm[\sum_{i\in \supp u: \frac{1}{2}|v(i)|<|w(i)|<2|v(i)|}w(i)e_i]=\norm[\sum_kc_k\sum_{n\in I_k}a_ny_n(i_n)e_{i_n}]
$$
which implies that for any $(c_k)$ we have
\begin{align*}
\norm[\sum_kc_k\sum_{n\in I_k}a_ny_n]\leq 4\norm[\sum_kc_k\sum_{n\in I_k}a_ny_n(i_n)e_{i_n}] 
\end{align*}
i.e. $(\sum_{n\in I_k}a_ny_n)_k$ and $(\sum_{n\in I_k}a_ny_n(i_n)e_{i_n})_k$ are equivalent.

On the other hand we have the following claim, which yields contradiction. Whereas the above reasoning holds for any $(y_n)$ and $(z_n)$ related by means of a suitable function $\tau$,  the claim uses only the fact that the spreading models of the basis of $X_U$ and of the chosen sequence $(y_n)$ are quite different and the basis of a variant of Schlumprecht space dominates the basis of $X_U$.
\begin{claim}\label{claim2}
The mapping $(y_n)_{n}\to (y_n(i_n)e_{i_n})_{n}$ extends to a strictly singular operator.
\end{claim}
Proof of Claim \ref{claim2}. We shall prove that the mapping carrying $(y_n)_n$ to the standard basis $(\bar{e}_n)$ of some variant of Schlumprecht space - defined by the function $f(x)=\sqrt{\log_2(x+1)}$ instead of $f(x)=\log_2(x+1)$ - is strictly singular. As the basis of such variant of Schlumprecht space is subsymmetric and dominates the basis of Gowers space, thus also the mapping $(y_n)_n\to(y_n(i_n)e_{i_n})_n$ would be strictly singular.
 
We apply results of \cite{S} taking into account that the basis $(\bar{e}_n)$ of a variant of Schlumprecht space is subsymmetric. By Prop. 2.5 \cite{S} the basis $(\bar{e}_n)$ is strongly dominated by $\ell_1$ (according to the Def. 2.1 of \cite{S}) and by  Lemma 2.4 \cite{S} satisfies for some $\delta_k\searrow 0$ and any scalars $(\alpha_n)$ the following
$$
\norm[\sum_n\alpha_n\bar{e}_n]\leq \max_k\delta_k\max_{k\leq n_1<n_2<\dots<n_k}\sum_{i=1}^k|\alpha_{n_i}|.
$$
Now in order to show that the mapping $M: (y_n)_n\to (\bar{e}_n)_n$ is strictly singular repeat part of the proof of Theorem 1.1 of \cite{S}. Take any normalized block sequence $(u_m)$ of $(y_n)$, $u_m=\sum_{i\in J_m}\alpha_iy_i$, $m\in\N$. Passing to a further block sequence, as $X_U$ does not contain $c_0$, we can assume that $\max_{i\in J_m}|\alpha_i|\to 0$ as $m\to\infty$. Given $k_0\in\N$ estimate the norm of $v_m=M(u_m)$ using that $(y_n)$ is unconditional and generates an $\ell_1$-spreading model:
\begin{align*}
\norm[\sum_{i\in J_m}\alpha_i\bar{e}_i]&\leq \max\left\{\max_{k=1,\dots, k_0-1}\delta_1\sum_{k\leq n_1<\dots<n_k, n_i\in J_m}|\alpha_{n_i}|\right.,\left. \max_{k\geq k_0}\delta_{k_0}\sum_{k\leq n_1<\dots<n_k, n_i\in J_m}|\alpha_{n_i}|\right\} \\
& \leq \max\{\delta_1k_0\max_{i\in J_m}|\alpha_i|, 2\delta_{k_0}\norm[u_m]\}\\
& \leq \max\{\delta_1k_0\max_{i\in J_m}|\alpha_i|, 2\delta_{k_0}\}.
\end{align*}
As $\delta_k\to 0$, choosing sufficiently big $k_0$ and $m$ we can force the norm of $v_m$ to be as small as needed, which proves that $v_m\to 0$ and finishes the proof of strict singularity of $M$ and of Claim \ref{claim2}.

Proof of (D3). We introduce some notation first. Given $n\in\N$ and $t\in\tau (\supp y_n)$ let $i_{n,t}\in\supp y_n$ be the unique index $i\in \supp y_n$ with $\tau(i)=t$. Notice that (j3) by definition of $(v_{n,j})_{n,j}$ implies the following 
\begin{enumerate}
\item[(j4)] for any $i\in\supp y_n$, $k\in\supp y_m$ with  $\tau(i)=\tau(k)$ we have $y_n(i)=y_m(k)$.
\end{enumerate}
Thus we can write $y_n=\sum_{t\in \tau(\supp y_n)}\gamma_te_{i_{n,t}}$ for some  scalars $(\gamma_t)_{t\in\N}\subset [0,1]$. 
Given any $t\in\N$ let also $N_t=\{n\in\N:\ t\in\tau (\supp y_n)\}$. 

The property (D3) follows from the next two claims. The first one is based only on properties of the subsequences of the basis described in the Fact \ref{fact2}(b). 
\begin{claim}\label{claim3}
Take $F\subset\N$ with $P_F|_Z$ strictly singular. Then for any $t\in\N$ the set $\{i_{n,t}: \ n\in N_t\}\cap F$ is finite.
\end{claim}
Proof of Claim \ref{claim3}. Assume that for some $t_0\in\N$ the set $H=\{i_{n,t_0}: n\in N_{t_0}\}\cap F$ is infinite. We shall prove that the projection $P_H|_Z$ is not strictly singular, which implies the claim.

Let $N=\{n\in N_{t_0}: \ i_{n,t_0}\in I\}$. Apply Fact \ref{fact2}(b) to the sequence $(e_{i_{n,t_0}})_{n\in N}$ obtaining a suitable normalized sequence $(w_k)$ with elements of the form $w_k=\sum_{n\in J_k}a_ne_{i_{n,t_0}}$, $k\in \N$.  

Now notice that
$$
\norm[\sum_{n\in J_k}a_nz_{n}(i_{n,t_0})e_{i_{n,t_0}}]=\norm[\sum_{n\in J_k}a_n\frac{\gamma_{t_0}}{4^{t_0}}e_{i_{n,t_0}}]=\frac{\gamma_{t_0}}{4^{t_0}}\norm[w_k]=\frac{\gamma_{t_0}}{4^{t_0}}
$$
whereas
\begin{align*}
\norm[\sum_{n\in J_k}a_n(z_{n}-z_{n}(i_{n,t_0})e_{i_{n,t_0}})]&=\norm[\sum_{t\in\N,t\neq t_0}\sum_{n\in J_k}a_nz_{n}(i_{n,t})e_{i_{n,t}}]\\
&\leq \sum_{t\in\N,t\neq t_0}\frac{\gamma_t}{4^{t}}\norm[\sum_{n\in J_k}a_ne_{i_{n,t}}]\\
&\leq \sup_{t\in\N}\norm[\sum_{n\in J_k}a_ne_{i_{n,t}}].
\end{align*}
As the vectors $\sum_{n\in J_k}a_ne_{i_{n,t}}$ have disjoint support with $w_k$ for any $k\in\N$, the last term converges to zero as $k\to\infty$ it follows that the projection $P_H|_Z$ is not strictly singular.

The next claim seems to be a rather natural requirement. 
\begin{claim}\label{claim4}
Take $G\subset\N$ with each of the sets $\{i_{n,t}: \ n\in N_t\}\setminus G$, $t\in\N$, finite. Then the projection $P_G|_Y$ is not strictly singular.  
\end{claim}
Proof of Claim \ref{claim4}. Notice that by (j3) for each $j\in\N$ we have $\supp v_{n,j}\subset G$ for all but finitely many $n$'s. Let $G'=\cup_{j\in 2\N}\cup_{n\in\N}\supp v_{n,j}\cap G$. We shall prove that the projection $P_{G'}|_Y$ is not strictly singular, which implies the claim. 

Recall that for $j\gg p$ then $\norm[v_{n_1,j}+\dots+v_{n_p,j}]\approx p/2$ \cite{KL}. Therefore by the assumption on $G$ for any $s,r\in\N$ and $\e>0$ we can pick $L\subset\N$ with $\# L=s$ and $j\in\N$ so that $\frac{1}{\# L} \sum_{n\in L} v_{n,2j}$ and $\frac{1}{\# L} \sum_{n\in L} v_{n,2j+1}$ are seminormalized $\ell_1^s$-averages with constant 2, with $\supp v_{n,2j}\subset G$ and $\supp v_{n,2j}>r$ for any $n\in L$. By construction of $(y_n)$ (precisely since $\norm[v_{n,j}]\geq 1/2$) it follows that also $\frac{1}{\# L}\sum_{n\in L}(y_n|_{G'})$ and $\frac{1}{\# L}\sum_{n\in L}(y_n|_{\N\setminus G'})$ are seminormalized $\ell_1^s$-averages with constant 4. It follows that we can pick a successive sequence $(L_s)$ such that the sequences $(u_s)$ and $(v_s)$, where
$$
u_s=\frac{1}{\# L_s}\sum_{n\in L_s}(y_n|_{G'}), \  v_s=\frac{1}{\# L_s}\sum_{n\in L_s}(y_n|_{\N\setminus G'}), \ s\in\N
$$ 
are seminormalized $\ell_1^s$-averages with constant 4, for any $s\in\N$. 

Now apply Fact \ref{fact2}(a) to the sequences $(u_s)$ and $(v_s)$, obtaining a normalized sequence 
$$
(\sum_{s\in J_k}a_s\frac{1}{\# L_s}\sum_{n\in L_s}(y_n|_{G'}))_{k\in \N},
$$ such that 
$$
\sum_{s\in J_k}a_s\frac{1}{\# L_s}\sum_{n\in L_s}(y_n|_{\N\setminus G'})\to 0, \ \  \ k\to\infty.
$$
This shows that the projection $P_{G'}|_Y$ is not strictly singular and ends the proof of the claim.
                                                       
In order to prove (D3) take a partition $F\cup G=\N$ and assume that $P_F|Z$ is strictly singular. By Claim \ref{claim3} for any $t\in\N$ the set $\{i_{n,t}: \ n\in N_t\}\cap F=\{i_{n,t}: \ n\in N_t\}\setminus G$ is finite, whereas by Claim \ref{claim4} the projection $P_G|Y$ is not strictly singular. This ends the proof of (D3) and thus of Theorem \ref{gow2}. 
\end{proof}
A natural question arises if one can find an operator which is not a strictly singular perturbation of a restriction of a diagonal operator inside any infinitely dimensional subspace of $X_U$. We shall discuss the proof of the above construction in any block subspace, with infinite Rapidly Increasing Sequences of special type playing the role of the basis of $X_U$ in the previous reasoning. 


The construction of an operator not of the form $D|_W+S$ in the space $X_U$ was based on the existence of a sequence generating an $\ell_{1}$-spreading model. As we have wrote above the existence of such sequence in Schlumprecht space was shown in \cite{KL} and was based on the finite representability  of $c_{0}$  in Schlumprecht space.  The finite representability  of $c_{0}$ in every block subspace of Schlumprecht space was later proved in \cite{M} and recently a new proof of this property concerning a variant of Gowers-Maurey space was given in \cite{FS}. Moreover, the authors show that the $c_{0}^{n}$ can be reproduced on a block sequence of a special type which also generates the basis of Schlumprecht space as a spreading model \cite{FS} Prop. 5.1. These  block sequences of special type, which can be found in any block subspace, were called a Special RIS (SRIS) according to their structure (\cite{FS} Def. 4.4). The proof rewritten in the case of Gowers unconditional space yields the first part of the following fact (let us notice here that the technical modification of the definition the original Gowers-Maurey space required for the main result of [7] are not needed for proving that SRIS generates the basis of Schlumprecht space).
\begin{fact}\label{fact6}
In every block subspace of $X_U$ there is a seminormalized SRIS $(x_i)$ such that 
\begin{enumerate}
 \item $(x_i)$ generates the basis $(\tilde{e}_i)$ of Schlumprecht space as a spreading model,
\item the mapping $\bar{e}_i\to x_i$, $i\in\N$, where $(\bar{e}_i) $ is the canonical basis of a variant of Schlumprecht space defined with use of the function $\sqrt{f}$ instead of $f$, extends to a bounded operator on a Schlumprecht space.
\end{enumerate}
 \end{fact}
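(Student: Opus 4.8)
The plan is to run two essentially independent arguments, one for each clause, following the strategy used in \cite{FS} to produce a Special RIS (SRIS) in an arbitrary block subspace and the strategy of \cite{KL,M} for reproducing $c_0^n$. First I would fix a block subspace $Y$ and recall the definition of SRIS from \cite{FS} Def.~4.4: a block sequence of $\ell_1$-averages whose lengths grow rapidly with respect to $f$ and whose supports are ``spread out'' in the precise quantitative sense that makes a special functional unable to load more than a bounded amount on any bounded-length piece. Since $X_U$ has the same norm structure as the Gowers--Maurey space except that special functionals depend on $(|x_1^*|,\dots,|x_j^*|)$ rather than on $(x_1^*,\dots,x_j^*)$, the construction of an SRIS inside $Y$ transfers verbatim -- the coding function $\sigma$ is used only to control lengths, not signs, so unconditionality of the basis is harmless. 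This gives a seminormalized SRIS $(x_i)\subset Y$; the technical point flagged parenthetically in the statement is that the modification of the original Gowers--Maurey construction needed for the \emph{main} result of \cite{FS} (reproducing $c_0^n$ isometrically) is \emph{not} needed here, since we only want the spreading-model conclusion, so no adjustment to the definition of $X_U$ is required.

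For clause (1), I would show that an SRIS generates the basis $(\tilde e_i)$ of Schlumprecht space $S$ as a spreading model. The upper estimate $\norm[\sum_{i=1}^k a_i x_{n_i}]\lesssim\norm[\sum a_i\tilde e_i]_S$ is the standard RIS estimate: for a rapidly increasing sequence of $\ell_1$-averages, every functional in the norming set of $X_U$ acts on $\sum a_ix_{n_i}$ essentially like a functional in the norming set of $S$ acts on $\sum a_i\tilde e_i$, because special functionals contribute only a lower-order term (this is exactly the kind of estimate underlying Fact \ref{fact2}(a), where the special-functional value $\e(k,c)\to0$). The lower estimate uses the SRIS property together with Fact \ref{fact4}: given $E_1<\dots<E_n$ one builds a norming functional for $S$ out of the functionals $x_i^*$ witnessing that each $x_i$ is an $\ell_1$-average, and the spreading of supports guarantees these assemble into an admissible averaging functional on $X_U$, recovering $\frac{1}{f(n)}\sum\norm[E_i\cdot]$ up to a constant; passing to an iterated limit as the indices $n_1\ll\dots\ll n_k$ go to infinity absorbs the constants and yields equality, i.e.\ the spreading model is exactly $(\tilde e_i)$. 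This follows the lines of \cite{AS} Prop.~3.1 cited after Fact \ref{fact3}, now applied along the SRIS rather than along the basis.

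For clause (2), I would exhibit the bounded operator $\bar e_i\mapsto x_i$, where $(\bar e_i)$ is the basis of the $\sqrt f$-variant $\bar S$ of Schlumprecht space. The content is the one-sided domination $\norm[\sum a_i x_i]_{X_U}\le M\norm[\sum a_i\bar e_i]_{\bar S}$ for all finitely supported scalars, with $M$ independent of the scalars -- not merely in the spreading-model limit. Here the SRIS property is essential: because the supports are spread out, one can run an induction on the ``complexity'' of norming functionals (as in the RIS basic inequality of Gowers--Maurey) showing that the worst case is an averaging functional of some length $n$, contributing a factor $\tfrac1{f(n)}\cdot(\text{sum of }n\text{ largest }|a_i|)$, and special functionals contribute an extra factor governed by $\tfrac1{\sqrt{f(k)}}$ summed over a length-$k$ block -- precisely the behaviour encoded by replacing $f$ with $\sqrt f$ in $\bar S$. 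Summing the two types of estimate against the norm of $\sum a_i\bar e_i$ in $\bar S$ closes the induction with a uniform constant.

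The main obstacle is clause (2): one must get a genuinely \emph{uniform} (non-asymptotic) domination of $(x_i)$ by $(\bar e_i)$, which requires the full strength of the SRIS structure and a careful RIS-type basic inequality rather than the soft spreading-model argument that suffices for clause (1). The delicate book-keeping is ensuring that the contribution of special functionals -- whose length $k$ ranges over the infinite set $K$ and whose coefficients carry the $1/\sqrt{f(k)}$ weight -- is exactly matched by the $\sqrt f$ in the definition of $\bar S$, so that no logarithmic factor is lost; this is the step where the choice of the variant Schlumprecht space (with $\sqrt f$) is forced, and where one must invoke that an SRIS, unlike a general RIS, controls the interaction between consecutive averages tightly enough for the induction to terminate with a constant independent of the length of the scalar sequence.
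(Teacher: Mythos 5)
Your proposal is essentially correct and takes the same route the paper takes, which is by deferral: the paper's entire ``proof'' of Fact~\ref{fact6} consists of pointing out that part~(1) is obtained by rewriting the proof of Proposition~5.1 of~\cite{FS} for $X_U$ (with the parenthetical caveat you correctly identified), and that part~(2) ``follows the lines of the proof of Prop~5.8 in~\cite{M}.'' Your sketch fills in the underlying machinery at the right level of detail and identifies the correct key points: for~(1), the RIS upper estimate with vanishing special-functional contribution and the lower estimate assembled from the averaging functionals witnessing the $\ell_1$-averages, along the lines of~\cite{AS} Prop.~3.1 / \cite{FS} Prop.~5.1; for~(2), a basic-inequality-type induction in which the $1/\sqrt{f(k)}$ weight on special functionals is exactly what forces the $\sqrt f$-variant $\bar S$, giving the uniform domination $\norm[\sum a_i x_i]\le M\norm[\sum a_i\bar e_i]_{\bar S}$ (which is indeed the intended reading of ``extends to a bounded operator,'' as the later application---``the basis of a suitable variant of Schlumprecht space dominates $(x_i)$ by Fact~\ref{fact6}(b)''---confirms). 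The only substantive difference is one of attribution: you ground clause~(2) in the Gowers--Maurey RIS basic inequality, whereas the paper credits Manoussakis~\cite{M} Prop.~5.8 for that domination estimate; the mechanism is the same, and this does not affect correctness.
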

The proof of the second part of the above fact follows the lines of the proof of Prop 5.8 in \cite{M}. In the sequel we shall use also the following simple observation: for a SRIS $(x_i)$ any finite sequence $(x_{k_1}, \dots, x_{k_N})$ with $k_1>N$ forms a RIS of $\ell_1$-averages.

\

Taking any sequence $(x_i)$ as in Fact \ref{fact6} we can again transfer the sequence $(\tilde{y}_n)$ of \cite{KL} generating an $\ell_1$-spreading model from Schlumprecht space to $[x_i]$ by substituting the basis $(\tilde{e}_i)$ with $(x_i)$ and repeat the construction of $(z_n)$. Recall that $\tilde{y}_n=\sum_{j=1}^n\frac{f(j)}{2j}\sum_{i\in I_j}\tilde{e}_i$, $\# I_j=j$, for each $n$, and take a sequence $(y_n)\subset [x_i]$ defined as
$$
y_n=\sum_{j=1}^nv_{n,j}=\sum_{j=1}^n\frac{f(j)}{2j}\sum_{i\in K_j}x_i, \ \ n\in\N
$$
again with $K_j$'s with $\# K_j=j$ pushed forward along the sequence $(x_i)$ to guarantee that the vectors $(y_n)$ form a seminormalized block sequence generating an $\ell_1$-spreading model. 

Repeat the definition the function $\tau:\cup_n\supp_{[x_i]} y_n\to \N$, taking into account the supports of $(y_n)$ with respect to the basic sequence $(x_i)$ instead of $(e_i)$. Set
$$
z_{n}=\sum_{j=1}^n\frac{f(j)}{2j}\sum_{i\in K_j}\frac{1}{4^{\tau(i)}}x_i, \ \ n\in\N
$$
and let $Y=[y_{n}]$, $Z=[z_{n}]$.

\

In order to repeat the proof of Theorem \ref{gow2} we shall need the following observation, which is a more precise formulation of Fact \ref{fact2}(a).
\begin{fact}\label{fact7} 
Fix a seminormalized block sequence $(u_n)\subset X_U$. Then for any  $c\geq 1$ and $\e>0$ there is a normalized vector $w=\sum_{n\in J}a_nu_n$, such that $\norm[\sum_{n\in J}a_nv_n]<\e$, for any RIS of $\ell_1$-averages $(v_1,\dots,v_{\#J})$ with constant $c$ and with $\supp u_n\cap \supp v_m=\emptyset$ for any $n,m\in\N$. 
\end{fact}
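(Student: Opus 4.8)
The plan is to follow the argument sketched for Fact \ref{fact2}(a), noting two things that are implicit there: the normalized vector $w$ produced depends only on the block sequence $(u_n)$ (together with a parameter $k$ that depends only on $c$ and $\e$), and the estimate coming out of the construction is uniform over \emph{all} RIS of $\ell_1$-averages of the prescribed length with constant $c$.

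First I would set $\delta=\inf_n\norm[u_n]>0$ and, for each $n$, fix a functional $u_n^*\in\mathbf{Q}$ supported in $\supp u_n$ with $\norm[u_n^*]\leq 1$ and $u_n^*(u_n)\geq\delta/2$. Choosing $k\in K$ large (to be specified at the end) and $n_1$ large relative to $k$, I would build inductively a block sequence $(x_j^*)_{j=1}^k$ of the form $x_j^*=\frac{1}{f(n_j)}\sum_{n\in C_j}u_n^*$ with $\#C_j=n_j$ and $\min C_j>n_j$, where $n_{j+1}=\sigma(\vvert[x_1^*],\dots,\vvert[x_j^*])$; by the growth condition on the coding function $\sigma$ the lengths $n_j$ increase fast enough that the basic inequality of \cite{G} applies to any RIS of $\ell_1$-averages of length $N:=\sum_{j\leq k}n_j$. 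Since the $u_n^*$ with $n\in C_j$ are disjointly supported of norm at most $1$ one gets $\norm[x_j^*]\leq 1$, so $x^*:=\frac{1}{\sqrt{f(k)}}\sum_{j\leq k}x_j^*$ is a legitimate special functional of length $k$. Putting
$$
w_k=\frac{\sqrt{f(k)}}{k}\sum_{j=1}^k\frac{f(n_j)}{n_j}\sum_{n\in C_j}u_n,\qquad J=\bigcup_{j\leq k}C_j,
$$
one has $x^*(w_k)=\frac1k\sum_{j\leq k}\frac1{n_j}\sum_{n\in C_j}u_n^*(u_n)\geq\delta/2$, hence $\norm[w_k]\geq\delta/2$, and $w:=w_k/\norm[w_k]$ is a normalized vector of the required form $\sum_{n\in J}a_nu_n$, with $a_n=\frac{1}{\norm[w_k]}\cdot\frac{\sqrt{f(k)}}{k}\cdot\frac{f(n_j)}{n_j}$ for $n\in C_j$.

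Then, given any RIS of $\ell_1$-averages $(v_n)_{n\in J}$ with constant $c$ and with $\supp u_n\cap\supp v_m=\emptyset$ for all $n,m$, I would estimate $\norm[\sum_{n\in J}a_nv_n]=\norm[\frac{\sqrt{f(k)}}{k}\sum_{j\leq k}\frac{f(n_j)}{n_j}\sum_{n\in C_j}v_n]/\norm[w_k]$ exactly as in the sketch of Fact \ref{fact2}(a): run the basic inequality against every type of norming functional of $X_U$ — basis functionals contribute at most the small sup-norm of the $v_n$'s once $n_1$ is large; functionals $\frac1{f(m)}\sum\phi_i$ are controlled by the RIS averaging lemma; a special functional gains the extra $1/\sqrt{f(\cdot)}$-factor from the coding because $x^*$ is disjointly supported from all $v_n$ — yielding a bound $\e(k,c)$ depending only on $k$, $c$ and $f$, with $\e(k,c)\to 0$ as $k\to\infty$. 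Dividing by $\norm[w_k]\geq\delta/2$ gives $\norm[\sum_{n\in J}a_nv_n]\leq 2\e(k,c)/\delta$, so it suffices to have chosen $k$ at the outset so that $2\e(k,c)/\delta<\e$. Since $w$ and $J$ are fixed before $(v_n)$ is seen, this is the asserted strengthening.

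The main obstacle — and the only point where work beyond Fact \ref{fact2}(a) is really needed — is verifying that $\e(k,c)$ is genuinely \emph{uniform}, i.e. that the basic inequality of \cite{G} bounds $\norm[\frac{\sqrt{f(k)}}{k}\sum_j\frac{f(n_j)}{n_j}\sum_{n\in C_j}v_n]$ purely in terms of $k$, $c$ and $f$, with no dependence on the particular lengths of the $\ell_1$-averages $v_n$ or on their supports. This is morally already present in the proof of Fact \ref{fact2}(a) (the displayed bound $\e(k,c)$ there carries no dependence on the transferred sequence), but for Fact \ref{fact7} it must be extracted carefully: one has to check that the RIS condition on $(v_n)_{n\in J}$ — which forces the length $N$ to be small relative to the length of the first average — together with the fast growth of the $n_j$ forced by $\sigma$, makes all error terms in the basic inequality absorbable into a single function of $k$ and $c$.
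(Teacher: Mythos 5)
Your proposal is correct and takes essentially the same route the paper intends: the paper gives no separate proof of Fact \ref{fact7}, presenting it only as ``a more precise formulation of Fact \ref{fact2}(a),'' and your argument is precisely the Gowers--Maurey special-functional construction from that sketch, with the two refinements the stronger statement needs — a lower bound $\norm[w_k]\geq\delta/2$ coming from $\inf_n\norm[u_n]>0$ rather than the assumption $u_n^*(u_n)=1$, and the observation that the RIS estimate $\e(k,c)$ depends only on $k$, $c$, $f$ and the pre-chosen lengths $(n_j)$ (not on the later-chosen $(v_n)$), so $w$ can be fixed before any RIS is seen. You have also correctly isolated the one nontrivial check, namely the uniformity of the basic inequality over all admissible RIS of the given length and constant, which is exactly what upgrades Fact \ref{fact2}(a) to Fact \ref{fact7}.
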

The proof of the property (D2) for $Y,Z$ can be rewritten in our case since $(x_i)$ generates $(\tilde{e}_i)$ as a spreading model by Fact \ref{fact6}(a), $(y_n)$ generates $\ell_1$-spreading model by \cite{KL} and the basis of a suitable variant of Schlumprecht space dominates $(x_i)$ by Fact \ref{fact6}(b). 

The proof of the property (D3) requires more attention since possible projections can split also the supports of $(x_i)$. However, a small modifications allow to repeat the reasoning. We repeat the notation of $i_{n,t}$ for any $n,t\in\N$, and $N_t$ for any $t\in\N$. Again for some  scalars $(\gamma_t)_{t\in\N}\subset [0,1]$ we have 
$$
y_n=\sum_{t\in \tau(\supp_{[x_i]}y_n)}\gamma_tx_{i_{n,t}}, \ \ z_n=\sum_{t\in \tau(\supp_{[x_i]}y_n)}\frac{\gamma_t}{4^t}x_{i_{n,t}}, \ \ n\in\N
$$ 
Then we have the following version of Claim \ref{claim3}.
\begin{claim}\label{claim5}
Take $F\subset\N$ with $P_{F}|_{Z}$ strictly singular.  Then for every $\e>0$ and $t\in\N$ the set $\{i_{n,t}: n\in N_t, \norm[P_{F}x_{i_{n,t}}]\geq\e\}$ is finite.
\end{claim}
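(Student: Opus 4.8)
\textbf{Plan of proof for Claim \ref{claim5}.}
The plan is to follow the scheme of the proof of Claim \ref{claim3}, replacing the basis $(e_i)$ by the SRIS $(x_i)$ and passing from ``the index $i_{n,t}$ belongs to $F$'' to ``the coordinate $P_F x_{i_{n,t}}$ is not too small''. So assume towards a contradiction that for some $t_0\in\N$ and some $\e>0$ the set $N=\{n\in N_{t_0}:\ \norm[P_F x_{i_{n,t_0}}]\geq\e\}$ is infinite; we shall produce an infinite-dimensional subspace of $Z$ on which $P_F$ is an isomorphism, contradicting strict singularity of $P_F|_Z$.

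First I would isolate the ``large'' parts of the relevant basis vectors: put $D_n=\supp_{(e_i)}(x_{i_{n,t_0}})\cap F$ for $n\in N$, so that $\norm[P_{D_n}x_{i_{n,t_0}}]\geq\e$. Since each $x_{i_{n,t_0}}$ is (part of) a SRIS, in particular an $\ell_1$-average of increasing length with a common constant, Fact \ref{fact5} shows that $(P_{D_n}x_{i_{n,t_0}})_{n\in N}$ is again a sequence of $\ell_1$-averages of increasing length with a common constant. Next I would apply Fact \ref{fact2}(a) — or more comfortably Fact \ref{fact7} — to the seminormalized block sequence $(x_{i_{n,t_0}})_{n\in N}$ together with the sequence of $\ell_1$-averages $(P_{\N\setminus D_n}x_{i_{n,t_0}})_{n\in N}$, whose supports (with respect to $(e_i)$) are disjoint from the supports of the $x_{i_{n,t_0}}$'s restricted to $F$; this yields a normalized sequence $w_k=\sum_{n\in J_k}a_n x_{i_{n,t_0}}$ with $\sum_{n\in J_k}a_n(P_{\N\setminus D_n}x_{i_{n,t_0}})\to0$. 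Equivalently $P_F w_k-P_{\cup_{n\in J_k}D_n}w_k\to0$, i.e. $P_F$ acts on $(w_k)$ asymptotically as the projection onto $\cup_{n\in N}D_n$, and since $\norm[w_k]=1$ with the $D_n$-parts seminormalized and successively supported, $\norm[P_F w_k]$ stays bounded below — so $P_F$ is not strictly singular on $[w_k]$.

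Finally, to land inside $Z$ rather than $[x_i]$, I would repeat the estimate of Claim \ref{claim3} verbatim: the vector $\sum_{n\in J_k}a_n z_n$ decomposes along the coordinates $t\in\tau(\supp y_n)$ as $\sum_t \frac{\gamma_t}{4^t}\sum_{n\in J_k}a_n x_{i_{n,t}}$, whose $t=t_0$ term is $\frac{\gamma_{t_0}}{4^{t_0}}w_k$ of norm $\frac{\gamma_{t_0}}{4^{t_0}}$, while the remaining terms are bounded by $\sup_{t\neq t_0}\norm[\sum_{n\in J_k}a_n x_{i_{n,t}}]$, which tends to $0$ as $k\to\infty$ because for $t\neq t_0$ the vectors $x_{i_{n,t}}$ are supported (in the $(x_i)$-basis) disjointly from the $w_k$'s, so $(\sum_{n\in J_k}a_n x_{i_{n,t}})_k$ is a normalized block sequence of a spreading-model-type basis on which such averages vanish. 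Hence $(z_n|_{\text{coordinates of }w_k})$ spans a subspace of $Z$ close to $[w_k]$, on which $P_F$ is an isomorphism — the desired contradiction.

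\textbf{Main obstacle.} The delicate point is the interaction between the two notions of support: $F$ lives in the $(e_i)$-indexing while $(y_n),(z_n)$ are blocks of the SRIS $(x_i)$, so $P_F$ may cut individual $x_i$'s. I expect the crux is therefore to verify that after restricting to the ``large'' pieces $P_{D_n}x_{i_{n,t_0}}$ one still has a genuine block sequence of $\ell_1$-averages to which Fact \ref{fact2}(a)/Fact \ref{fact7} applies with a disjointly supported family — i.e. checking that $\inf_n\norm[P_{D_n}x_{i_{n,t_0}}]>0$ (immediate from the hypothesis $\norm[P_F x_{i_{n,t_0}}]\geq\e$) suffices to invoke Fact \ref{fact5}, and that the ``error'' sequence $(P_{\N\setminus D_n}x_{i_{n,t_0}})$ is itself an admissible sequence of $\ell_1$-averages disjoint from the $x_{i_{n,t_0}}$'s. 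Once this bookkeeping is in place, the rest is the same computation as in Claim \ref{claim3}.
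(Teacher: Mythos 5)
Your plan follows the same general route as the paper --- adapt Claim \ref{claim3} by replacing $(e_i)$ with the SRIS $(x_i)$, use Fact \ref{fact5} to restrict $\ell_1$-averages, and then invoke Fact \ref{fact2}(a)/Fact \ref{fact7} to concentrate mass on the $F$-part --- but there is a genuine error in the step where you apply Fact \ref{fact7}.

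You propose to apply Fact \ref{fact7} with $u_n=x_{i_{n,t_0}}$ (the \emph{full} SRIS elements) and $v_n=P_{\N\setminus D_n}x_{i_{n,t_0}}$, and you assert in your ``Main obstacle'' paragraph that the error sequence $(P_{\N\setminus D_n}x_{i_{n,t_0}})$ is ``disjoint from the $x_{i_{n,t_0}}$'s.'' That is false: $P_{\N\setminus D_n}x_{i_{n,t_0}}$ is a sub-vector of $x_{i_{n,t_0}}$, so $\supp v_n\subset\supp u_n$, and the hypothesis $\supp u_n\cap\supp v_m=\emptyset$ of Fact \ref{fact7} fails. Consequently the vector $w_k=\sum_{n\in J_k}a_nx_{i_{n,t_0}}$ you extract does not come with the smallness estimate $\sum_{n\in J_k}a_nP_{\N\setminus D_n}x_{i_{n,t_0}}\to 0$; your intermediate identity ``$P_Fw_k-P_{\cup D_n}w_k\to 0$'' is vacuous (the two projections agree on $w_k$), which is a symptom of the same confusion. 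The correct choice, and the one the paper makes, is to take $u_n=P_Fx_{i_{n,t_0}}$ as the seminormalized block sequence to which Fact \ref{fact7} is applied; then the disjointly supported RIS one controls are $v_n=(I-P_F)x_{i_{n,t_0}}$ together with the sequences $(x_{i_{n,t}})_n$ for $t\neq t_0$, the disjointness really holds, and one obtains $w_k=\sum_{n\in J_k}a_nP_Fx_{i_{n,t_0}}$ of norm $1$ while all the other pieces are small. With that modification the rest of your computation (the $P_J$-versus-$I-P_J$ estimate on $\sum_{n\in J_k}a_nz_n$) goes through as in Claim \ref{claim3}.

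A second, smaller omission: to feed $(I-P_F)x_{i_{n,t_0}}$ into Fact \ref{fact7} as a RIS of $\ell_1$-averages you first need Fact \ref{fact5}, which requires $\inf_n\norm[(I-P_F)x_{i_{n,t_0}}]>0$. This need not hold, and you verify seminormalization only for the $F$-part $P_{D_n}x_{i_{n,t_0}}$. The paper splits into two cases: if $((I-P_F)x_{i_{n,t_0}})_n$ is seminormalized, apply Facts \ref{fact5} and \ref{fact7} as above; if $\liminf_n\norm[(I-P_F)x_{i_{n,t_0}}]=0$, pass to a subsequence with fast decay and bound $\sum_{n\in J_k}a_n(I-P_F)x_{i_{n,t_0}}$ directly. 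Your proposal does not address the second case.
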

Proof of Claim \ref{claim5}. On the contrary, assume that $\norm[P_{F}x_{i_{n,t_0}}]\geq \e$ for some $\e$, $t\in\N$ and infinitely many $n$'s. The collection of $i_{n,t_0}$'s denote by $H$. We shall prove that the mapping $P_J|_Z$ is not strictly singular, where $J=\cup_{i\in H}\supp x_{i_{n,t_0}}\cap F$, which will finish the proof. 

Assume first that $((I-P_F)x_{i_{n,t_0}})_n$ is seminormalized. Then by Fact \ref{fact5} $((I-P_{F})x_{i_{n,t_0}})_n$ is a sequence of $\ell_1$-averages of increasing length. Pick an infinite $M\subset\N$ so that any $N$ elements of the sequence $((I-P_{F})x_{i_{n,t_0}})_{n\in M}$ starting after $N$-th element form a RIS of $\ell_1$-averages. Now by Fact \ref{fact7} for any $k\in\N$ choose a vector $w_k=\sum_{n\in J_k}a_n P_Fx_{i_{n,t_0}}$, such that $\norm[\sum_{n\in J_k}a_nx_{i_{n,t}}]<1/2^k$, for any $t\in\N$, $t\neq t_0$ and $\norm[\sum_{n\in J_k}a_n(I-P_{F})x_{i_{n,t_0}}]\leq 1/2^k$. It follows that
$$
\norm[P_J\sum_{n\in J_k}a_nz_{n}]=\norm[\sum_{n\in J_k}a_n\frac{\gamma_{t_0}}{4^{t_0}}x_{i_{n,t_0}}]=\frac{\gamma_{t_0}}{4^{t_0}}\norm[w_k]=\frac{\gamma_{t_0}}{4^{t_0}}
$$
whereas
\begin{align*}
\norm[(I-P_J)\sum_{n\in J_k}a_nz_{n}]&=\norm[\sum_{t\in\N,t\neq t_0}\sum_{n\in J_k}a_n\frac{\gamma_t}{4^t}x_{i_{n,t}}+\sum_{n\in J_k}(I-P_F)\frac{\gamma_{t_0}}{4^t}x_{i_{n,t_0}}]\\
&\leq \sum_{t\in\N,t\neq t_0}\frac{1}{4^{t}}\norm[\sum_{n\in J_k}a_n\gamma_tx_{i_{n,t}}]+\frac{1}{4^{t_0}}\norm[\sum_{n\in J_k}a_n(I-P_F)\gamma_{t_0}x_{i_{n,t_0}}]\leq \frac{1}{2^k}.
\end{align*}
If $\liminf_n\norm[(I-P_F)x_{i_{n,t_0}}]=0$, then passing to a subsequence we can assume that $\norm[(I-P_F)x_{i_{n,t_0}}]\leq 1/2^n$, and in the above estimate we have control over $\norm[\sum_{n\in J_k}a_n(I-P_{F})x_{i_{n,t_0}}]$ straightforward. Therefore in both cases the above estimates prove that the projection $P_J|_Z$ is not strictly singular which yields contradiction.

On the other hand we have the following version of Claim \ref{claim4}.
\begin{claim}\label{claim6}
Take $G\subset\N$ with $(I-P_G)x_{i_{n,t}} \xrightarrow[n\to\infty]{}0$ for any $t\in \N$. Then $P_{G}|_{Y}$ is not strictly singular. 
\end{claim}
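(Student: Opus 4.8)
The plan is to transcribe, essentially verbatim, the proof of Claim~\ref{claim4}; the one genuinely new point is that the set $G$ may now split the $(e_i)$-supports of the SRIS vectors $x_i$, which is precisely what the hypothesis $(I-P_G)x_{i_{n,t}}\to 0$ is there to neutralise.

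The first step is the routine observation that for every \emph{fixed} $j$ one has $(I-P_G)v_{n,j}\xrightarrow[n\to\infty]{}0$. Indeed, by (j3) and (j4) the set $T_j=\tau(\supp_{[x_i]}v_{n,j})$ does not depend on $n$ and has cardinality $j$, and $v_{n,j}=\frac{f(j)}{2j}\sum_{t\in T_j}x_{i_{n,t}}$, so $\norm[(I-P_G)v_{n,j}]\le\frac{f(j)}{2j}\sum_{t\in T_j}\norm[(I-P_G)x_{i_{n,t}}]\to0$. Next, in exact analogy with Claim~\ref{claim4}, put
$$G'=G\cap\Bigl(\bigcup_{j\in 2\N}\bigcup_n\supp v_{n,j}\Bigr)$$
and aim at showing that $P_{G'}|_Y$ is not strictly singular; since $G'\subseteq G$ and the basis of $X_U$ is $1$-unconditional one has $\norm[P_Gx]\ge\norm[P_{G'}x]$ for all $x$, so this suffices. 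As the supports $(\supp v_{n,j})_{n,j}$ are pairwise disjoint, $P_{G'}y_n=\sum_{j\in2\N,\,j\le n}P_Gv_{n,j}$ and $(I-P_{G'})y_n=\sum_{j\ \mathrm{odd},\,j\le n}v_{n,j}+\sum_{j\in2\N,\,j\le n}(I-P_G)v_{n,j}$.

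Now I would repeat the selection from Claim~\ref{claim4}. Using that $\norm[v_{n_1,j}+\dots+v_{n_p,j}]\approx p/2$ for $j\gg p$ together with the observation above, for every $s\in\N$ choose first an even $j_s$ with $j_s\gg s$, and then a set $L_s\subset\N$ with $\#L_s=s$, taken consecutive ($\max L_{s-1}<\min L_s$) and with $\min\supp v_{n,j_s}$ ($n\in L_s$) large enough, so that in addition $\norm[(I-P_G)v_{n,j_s}]$ is as small as we wish for $n\in L_s$. Exactly as in Claim~\ref{claim4} one checks that
$$u_s=\frac{1}{\#L_s}\sum_{n\in L_s}P_{G'}y_n,\qquad v_s=\frac{1}{\#L_s}\sum_{n\in L_s}(I-P_{G'})y_n$$
are seminormalized $\ell_1^s$-averages with a common constant: for the lower estimate on $u_s$ note that, $j_s$ being even, $P_{G'}y_n=P_Gv_{n,j_s}+r_n$ with $r_n$ disjointly supported from $\supp v_{n,j_s}$ and $P_Gv_{n,j_s}=v_{n,j_s}-(I-P_G)v_{n,j_s}$, so by unconditionality $\norm[u_s]\ge\norm[\frac{1}{\#L_s}\sum_{n\in L_s}v_{n,j_s}]-\frac{1}{\#L_s}\sum_{n\in L_s}\norm[(I-P_G)v_{n,j_s}]\approx1/2$; for the lower estimate on $v_s$ note that $v_{n,j_s+1}$ is an odd block, hence lies entirely off $G'$, so $(I-P_{G'})y_n=v_{n,j_s+1}+(\text{disjoint remainder})$ and $\norm[v_s]\ge\norm[\frac{1}{\#L_s}\sum_{n\in L_s}v_{n,j_s+1}]\approx1/2$; the upper estimates are $\norm[P_{G'}y_n],\norm[(I-P_{G'})y_n]\le\norm[y_n]$. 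Moreover $u_s\in[e_i:i\in G']$ and $v_s\in[e_i:i\notin G']$, so $(u_s)$ and $(v_s)$ are block sequences with $\supp v_s\cap\supp u_m=\emptyset$ for all $s,m$.

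Finally, apply Fact~\ref{fact2}(a) to $(u_s)$ and $(v_s)$ to obtain $J_k$ and scalars $(a_s)_{s\in J_k}$ with $w_k=\sum_{s\in J_k}a_su_s$ normalized and $z_k=\sum_{s\in J_k}a_sv_s\to0$. Set $\tilde w_k=\sum_{s\in J_k}a_s\frac{1}{\#L_s}\sum_{n\in L_s}y_n\in Y$; then $P_{G'}\tilde w_k=w_k$ and $(I-P_{G'})\tilde w_k=z_k\to0$, so $\norm[\tilde w_k]\to1$ while $\norm[P_{G'}\tilde w_k]=1$, and by the standard small-perturbation argument (passing to a subsequence so that $\sum_k\norm[z_k]$ is small enough) $P_{G'}$ is an isomorphism of $[\tilde w_k]$ onto $[w_k]$. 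Hence $P_{G'}|_Y$, and therefore $P_G|_Y$, is not strictly singular. The only real care is needed in the passage to $G'$ and in checking that restricting $y_n$ to $G'$ and to its complement preserves the $\ell_1$-average structure carried by the even, respectively odd, blocks $v_{n,j}$ — everything else is a verbatim copy of the proof of Claim~\ref{claim4}.
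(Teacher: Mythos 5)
Your proof is correct and follows the same route as the paper: observe that the hypothesis gives $(I-P_G)v_{n,j}\to 0$ for each fixed $j$, define $G'$ exactly as in Claim~\ref{claim4}, select the even blocks $v_{n,j_s}$ with the additional requirement that $\norm[(I-P_G)v_{n,j_s}]$ be small so that the on-$G'$ and off-$G'$ pieces of the averages remain seminormalized $\ell_1^s$-averages, and then invoke Fact~\ref{fact2}(a). The paper states this more tersely (it just names the extra requirement $\norm[(I-P_G)v_{n,2j}]<1/2^n$ and says to repeat Claim~\ref{claim4}); you have simply unpacked the same argument, correctly identifying that the role of the smallness of $(I-P_G)v_{n,j_s}$ is to keep $(P_{G'}y_n)_{n\in L_s}$ seminormalized and to make $(w_k)$ and $(\tilde w_k)$ equivalent.
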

Proof of Claim \ref{claim6}.
For $G$ as in the claim by definition of $(y_n)$ and $\tau$ we have $(I-P_{G})v_{n,j}\xrightarrow[n\to\infty]{}0$ for every $j$. Now we repeat the reasoning from the proof  of Claim \ref{claim4} defining $G'$ in the same way and choosing successive $L_s\subset\N$ in such a way that the vectors
$$
w_s=\frac{1}{\# L_s}\sum_{n\in L_s}v_{n,2j}, \  x_s=\frac{1}{\# L_s}\sum_{n\in L_s}v_{n,2j+1}, \ s\in\N
$$
are seminormalized $\ell_1^s$-averages with constant 4 and with additional requirement that $\norm[(I-P_G)v_{n,2j}]<1/2^n$. The last condition guarantees that $(w_s)_s$ and $(P_{G'}w_s)_s$ are equivalent which allows for repeating the rest of the proof of Claim \ref{claim4}. 

Now in order to obtain the property (D3) for $Y$ and $Z$ take any partition $F\cup G=\N$ and assume that $P_F|_Z$ is strictly singular. Then by Claim \ref{claim5} for any $t\in\N$ we have $P_F(x_{i_{n,t}})\xrightarrow[n\to\infty]{}0$ which by Claim \ref{claim6} implies that $P_G|_Y$ is not strictly singular. Thus we proved that subspaces $Y=[y_n]$ and $Z=[z_n]$ satisfy (D1), (D2) and (D3). As by Fact \ref{fact6} such block subspaces can be found in any block subspace of $X_U$, by Lemma \ref{fact1} and a standard perturbation argument we get the following
\begin{theorem}\label{gow3}
For any infinitely dimensional subspace $X$ of Gowers unconditional space $X_U$ there is an operator defined on a subspace of $X$ which is not a strictly singular perturbation of a restriction of a diagonal operator on $X_U$. 
\end{theorem}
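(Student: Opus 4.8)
The plan is to produce the required operator first inside a block subspace of $X_U$, using the construction and the Claims assembled above, and then to carry it over to an arbitrary subspace of $X_U$ by a routine compact‑perturbation argument.

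\emph{Reduction to a block subspace.} Let $X\subset X_U$ be infinitely dimensional. By a standard gliding hump argument $X$ contains a normalized basic sequence $(u_n)$ which, for a suitable normalized block sequence $(b_n)$ of the basis of $X_U$, satisfies $\sum_n\norm[u_n-b_n]<\e$, with $\e>0$ small enough that the linear extension $V$ of $b_n\mapsto u_n$ is an isomorphism from the block subspace $\wt{X}:=[b_n]$ onto $[u_n]\subset X$. Writing $\iota\colon\wt{X}\hookrightarrow X_U$ for the inclusion, the operator $C:=V-\iota$ is compact. We first build a ``bad'' operator on a subspace of $\wt{X}$ and then transport it through $V$.

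\emph{The construction inside $\wt{X}$.} Apply Fact \ref{fact6} to the block subspace $\wt{X}$ to obtain a seminormalized SRIS $(x_i)\subset\wt{X}$ with properties (1), (2) of that fact, and carry out the construction described after Fact \ref{fact6}: transport the sequence $(\tilde{y}_n)$ of \cite{KL} along $(x_i)$ to obtain $(y_n)\subset[x_i]$ generating an $\ell_1$-spreading model, define the function $\tau$ on the $(x_i)$-supports and the sequence $(z_n)$, and set $Y=[y_n]$, $Z=[z_n]$. Since $(x_i)$ is a block sequence of the basis of $X_U$, both $Y$ and $Z$ are block subspaces of $\wt{X}$, so (D1) holds. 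Property (D2) holds by the argument indicated in the text (the SRIS analogue of Claim \ref{claim2}, using Fact \ref{fact6}(a),(b) and \cite{KL}), and (D3) holds by Claims \ref{claim5} and \ref{claim6}. Hence $Y,Z$ are in the situation of Lemma \ref{fact1}: by (D2) the projection $P_Y\colon Y+Z\to Y$ is bounded, and since (D3) rules out any partition $\N=F\cup G$ for which both $P_G|_Y$ and $P_F|_Z$ are strictly singular, Lemma \ref{fact1} shows that $P_Y$ is \emph{not} of the form $D|_{Y+Z}+S$ with $D\colon X_U\to X_U$ diagonal and $S$ strictly singular.

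\emph{Transfer to $X$.} Put $W=V(Y+Z)\subset[u_n]\subset X$ and $T=VP_YV^{-1}\colon W\to W$, a bounded operator on the subspace $W$ of $X$. Assume towards a contradiction that $T=D|_W+S'$ for some diagonal $D\colon X_U\to X_U$ and some strictly singular $S'$. Composing on the right with $V$ and using $VP_YV^{-1}V=VP_Y$, we obtain, as operators from $Y+Z$ into $X_U$,
$$
VP_Y=DV+S'V.
$$
Now substitute $V=\iota+C$. The left‑hand side equals $\iota P_Y+CP_Y$, with $CP_Y$ compact; on the right, $DV=D|_{Y+Z}+DC$ with $DC$ compact, while $S'V$ is strictly singular, being the composition of the strictly singular operator $S'$ with the isomorphism $V$. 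Consequently, viewing $P_Y$ as an operator into $X_U$,
$$
P_Y=D|_{Y+Z}+\bigl(DC+S'V-CP_Y\bigr),
$$
and the operator in parentheses is strictly singular. This contradicts the previous paragraph, so $T$ is an operator on the subspace $W$ of $X$ that is not a strictly singular perturbation of a restriction of a diagonal operator on $X_U$, as required.

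\emph{Main obstacle.} The real work lies not in the above assembly but in the construction inside an arbitrary block subspace, i.e.\ in establishing (D2) and (D3) with the SRIS $(x_i)$ in the role played by the basis $(e_i)$ in Theorem \ref{gow2}. In particular Claims \ref{claim5} and \ref{claim6} — the analogues of Claims \ref{claim3} and \ref{claim4}, in which possible coordinate projections may now split the supports of the $x_i$ themselves — have to be pushed through using the $\ell_1$-average structure of subsequences of an SRIS (Fact \ref{fact5}), the domination of $(x_i)$ by a variant of Schlumprecht space (Fact \ref{fact6}) and the sharper form of Fact \ref{fact2}(a) recorded in Fact \ref{fact7}. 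Given these, the gliding hump reduction and the compact‑perturbation bookkeeping above are routine.
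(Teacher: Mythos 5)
Your proposal is correct and follows essentially the same route as the paper: carry out the construction of $(y_n),(z_n)$ on a SRIS $(x_i)$ inside a block subspace via Fact \ref{fact6}, verify (D1)--(D3) through Claims \ref{claim5} and \ref{claim6}, and then pass from block subspaces to arbitrary subspaces. Where the paper merely invokes ``a standard perturbation argument,'' you have spelled it out explicitly (the gliding-hump approximation $V=\iota+C$ with $C$ compact, and the identity $P_Y=D|_{Y+Z}+(DC+S'V-CP_Y)$ deduced from a hypothetical decomposition of $T=VP_YV^{-1}$), and that bookkeeping is sound.
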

We close this section with an observation that the "diagonal + strictly singular" property holds for block subspaces of $X_U$. Namely we prove the following version of Prop. 7.5 and 7.6 of \cite{ABR} in case of Gowers unconditional space, generalizing Theorem 29 \cite{GM}. 
\begin{proposition}\label{gow1} Let $X_U$ be Gowers unconditional space, $Y$ - a block subspace of $X_U$. Then 

(i) any bounded diagonal-free operator $T: Y\to X_U$ is strictly singular,

(ii) any bounded operator $T: Y\to X_U$ is a strictly singular perturbation of a restriction of a diagonal operator on $X_U$. 
\end{proposition}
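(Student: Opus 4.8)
The plan is to deduce (ii) from (i) together with Proposition \ref{prop1}, and to prove (i) by splitting a diagonal-free operator into a ``within--range'' part, handled purely by tightness by support, and an ``outer'' part, handled by the $\ell_1$-average/RIS machinery of $X_U$ in the spirit of Theorem 29 of \cite{GM} and Propositions 7.5--7.6 of \cite{ABR}.

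\textbf{Reducing (ii) to (i).} I would apply Proposition \ref{prop1} to $T\colon Y=[y_n]\to X_U$ to write $T=D|_Y+S+R$ with $D$ a bounded diagonal operator on $X_U$, $S$ strictly singular and $R$ diagonal-free on $Y$. Granting (i), $R$ is strictly singular, hence $T-D|_Y=S+R$ is a bounded strictly singular operator $Y\to X_U$, which is precisely the content of (ii). So everything rests on (i).

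\textbf{The within--range part.} Let $T\colon Y=[y_n]\to X_U$ be bounded with $\supp y_n\cap\supp Ty_n=\emptyset$; by Remark \ref{unc} I may assume the basis $1$-unconditional and $(y_n)$ normalized. Write $\ran y_n$ for the least interval containing $\supp y_n$ and split $Ty_n=T_0y_n+T_1y_n$ with $T_0y_n=P_{\ran y_n}Ty_n$ and $T_1y_n=(I-P_{\ran y_n})Ty_n$ (after first passing, if necessary, to a subsequence of $(y_n)$ and truncating each $Ty_n$ to a finite interval, which only costs a norm-summable, hence compact, perturbation and makes $T_0,T_1$ genuine bounded operators). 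For any block vector $w=\sum_{n\in D}b_ny_n$ of $(y_n)$ one has $\supp T_0w\subseteq\bigcup_{n\in D}(\ran y_n\setminus\supp y_n)$, and since the intervals $(\ran y_n)$ are pairwise disjoint this set is disjoint from $\supp w\subseteq\bigcup_{n\in D}\supp y_n$ (the case $n'=n$ uses diagonal-freeness, the case $n'\neq n$ uses disjointness of ranges). Thus $T_0$ carries every block subspace of $Y$ into a disjointly supported subspace; were it not strictly singular it would restrict to an isomorphism between two disjointly supported infinite-dimensional subspaces, contradicting tightness by support. So $T_0$ is strictly singular.

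\textbf{The outer part, and the main obstacle.} It remains to show $T_1$ is strictly singular, and this is the hard step. If one only passes to a \emph{subsequence} $(y_{n_k})$ of $(y_n)$ one can, by choosing $n_k$ fast enough and truncating, confine $\hat T_1y_{n_k}$ (a compact perturbation of $T_1y_{n_k}$) to the two gaps immediately adjacent to $\ran y_{n_k}$, which are disjoint from every $\ran y_{n_j}$; then $\hat T_1$ maps $[y_{n_k}]$ into a disjointly supported subspace and is strictly singular on $[y_{n_k}]$ by tightness by support. The obstacle is that $T_1$ need not be strictly singular already because it can re-enter $\ran y_{n'}$ for $n'\neq n$, \emph{including $n'$ in the same block as $n$}; this ``off-diagonal within a block'' behaviour is invisible to a pure support argument, and here I would argue exactly as in Theorem 29 of \cite{GM} and Propositions 7.5--7.6 of \cite{ABR}: assuming $T_1$ not strictly singular, pick by a gliding-hump/perturbation argument a normalized block sequence $(u_m)$ of $(y_n)$ with $\|T_1u\|\ge\theta\|u\|$ on $[u_m]$ and $(T_1u_m)$ a block sequence; using Fact \ref{fact4} build inside $[u_m]$ a rapidly increasing sequence of normalized $\ell_1$-averages and form the seminormalized RIS-average $x=\tfrac{f(N)}{N}\sum_{i=1}^{N}x_i$, so $\|T_1x\|\ge\theta/2$; then estimate $\|T_1x\|$ against the norming functionals of $X_U$ (unit vectors, the $f(m)^{-1}$-functionals and special functionals), using the upper RIS estimates together with the fact --- the same mechanism that makes $X_U$ tight by support, cf.\ Fact \ref{fact2} and the estimate behind Claim \ref{claim2} --- that a long functional aligned with $\supp x$ cannot be matched by one aligned with the displaced vector $T_1x$, to conclude $\|T_1x\|<\theta/2$ for $N$ large, a contradiction. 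Hence $T_1$, and so $T=T_0+T_1$, is strictly singular, proving (i); the RIS/special-functional estimate for $T_1$ is the place where the concrete structure of $X_U$ is indispensable.
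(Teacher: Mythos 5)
Your reduction of (ii) to (i) via Proposition~\ref{prop1} matches the paper's. The observation that the within--range part $T_0$ (with $T_0y_n=P_{\ran y_n}Ty_n$, where $\ran y_n$ denotes the smallest interval containing $\supp y_n$) is strictly singular is correct, since by diagonal-freeness $\supp T_0(Y)\subseteq\bigcup_n(\ran y_n\setminus\supp y_n)$, which is disjoint from $\supp Y\subseteq\bigcup_n\supp y_n$, and tightness by support applies; as a side remark, boundedness of $T_0$ does not follow from truncating $Ty_n$ but rather from averaging $V_\epsilon T U_\epsilon$ over sign-diagonal operators $U_\epsilon,V_\epsilon$ adapted to the block intervals $\ran y_n$. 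This preliminary split is not in the paper --- the paper instead decomposes $Tx_n$ per $\ell_1$-average $x_n$ --- but it is a harmless reduction.

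The genuine gap is in the treatment of $T_1$, which is not actually carried out. You correctly identify the obstacle: $T_1$ can re-enter $\supp y_{n'}$ for $n'\neq n$ in the same block, so for a block vector $x$ of $(y_n)$ the image $T_1x$ need not be displaced from $\supp x$. But the sketch you give (estimate $\|T_1x\|$ against norming functionals, ``using the fact that a long functional aligned with $\supp x$ cannot be matched by one aligned with the displaced vector $T_1x$'') presupposes exactly the disjointness that fails for those terms. What the paper does to close this --- following Lemmas 7.2 and 7.4 of \cite{ABR} --- is a combinatorial averaging identity, Claim~\ref{countlem}: writing an $\ell_1^n$-average as $x_n=\sum_{k\in A_n}a_ky_k$ and using diagonal-freeness, the diagonal block $R_{A_n}Tx_n$ of $Tx_n$ equals an explicit constant $\lambda_n$ times the average over all balanced partitions $(B,C)$ of $A_n$ of the cross terms $R_BTR_Cx_n$. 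Each cross term has disjointly supported source $R_Cx_n$ (which by Fact~\ref{fact5} is still an $\ell_1$-average) and image $R_BTR_Cx_n$, so Claim~\ref{partition} drives it to zero via Fact~\ref{fact2}(a), and the averaging identity transfers this smallness to the whole diagonal block $R_{A_n}Tx_n$. Only after eliminating $R_{A_n}Tx_n$ is the residue $(I-R_{A_n})Tx_n$ disjointly supported from $x_n$ and the tightness mechanism directly applicable. This counting lemma is the combinatorial heart of the proof; you name the right sources (Thm.~29 of \cite{GM}, Prop.~7.5--7.6 of \cite{ABR}), but your sketch does not extract their essential device, and without it the argument for $T_1$ does not close.
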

\begin{proof} By Prop. \ref{prop1}, as $X_U$ is tight by support, the second part follows from the first part. The proof of the first part is a variant of the proof of Prop. 7.5 of \cite{ABR} in our setting, which uses technique of \cite{ALT}, we include it for the sake of completeness. 

Take a bounded operator $T:Y\to X_U$, where $Y=[y_k]$ is a block subspace of $X_U$. Assume that $T$ is diagonal-free, i.e. $\supp Ty_k\cap\supp y_k=\emptyset$ for each $k\in\N$. We shall prove that for any sequence of $(x_n)$ of normalized $\ell_1^n$-averages $Tx_n$ converges to zero. By Fact \ref{fact4} it follows that $T$ is strictly singular, which ends the proof of the Proposition. 

Fix a block sequence $(x_n)\subset [y_k]$ of normalized $\ell_1^n$-averages. Passing to subsequence, after small perturbation, we can assume that $(Tx_{n}+x_{n})_{n\in N}$ is a block sequence. Write each $x_n$ as $x_{n}=\sum_{k\in A_{n}}a_{k}y_{k}$. For every  $B\subset \N$  denote by  $R_{B}$  the projection on $[e_{j}:j\in\cup_{i\in B}\supp y_{i}]$. 
\begin{claim}\label{partition}[(cf. Lemma 7.2 \cite{ABR})]
For any partitions $A_n=B_{n}\cup C_{n}$, $n\in\N$, we have  $\lim_{n} R_{C_{n}}TR_{B_{n}}x_{n}=0$.
\end{claim}
Proof of Claim \ref{partition}. Take partitions $A_n=B_n\cup C_n$, $n\in\N$, and assume that $\inf_{n\in N}\norm[R_{C_{n}}TR_{B_{n}}x_{n}]>0$ for some infinite $N\subset\N$. Then, as $T$ is bounded, $\inf_{n\in N}\norm[R_{B_n}x_n]>0$. By Fact \ref{fact5} the sequence $(R_{B_n}x_n)_{n\in N}$ is also a sequence of $\ell_1$-averages of increasing length with a common constant. Apply Fact \ref{fact2}(a) to the seminormalized block sequence $u_n=R_{C_{n}}TR_{B_{n}}x_{n}$ and  $v_n=R_{B_{n}}x_{n}$ , $n\in N$, obtaining sequences $(z_k)$ and $(w_k)$ with $z_k=\sum_{n\in J_k}b_nR_{B_n}x_n$, $w_k=\sum_{n\in J_k}b_nR_{C_n}TR_{B_n}x_n$, $\norm[w_k]=1$, $k\in\N$, and $z_k\to 0$, which contradicts the boundedness of $T$ and ends the proof of the claim.

Let now
$$
\mathcal{P}_{n}=
\begin{cases}
\{(B,C):B\cup C=A_{n}, B\cap C=\emptyset, \# B=\# C=\# A_{n}/2\} , \,\,\,\textrm{if $\# A_{n}$  is even},
\\
\{(B,C):B\cup C=A_{n}, B\cap C=\emptyset, |\# B-\# C|=1\} , \,\,\textrm{if $\# A_{n}$ is odd},
\end{cases}
$$
and set $L_{n}$ to be the integer part of $\# A_{n}/2$. 

\begin{claim}\label{countlem}[cf. Lemma 7.4 \cite{ABR}] $R_{A_{n}}Tx_{n}=\frac{\lambda_n}{\# \mathcal{P}_n} \sum_{(B,C)\in \mathcal{P}_n}R_{B}TR_{C}x_n$, where 
$$
\lambda_n=
\begin{cases}
\ \frac{2L_n(2L_n-1)}{L_n^2}, \text{ if } \# A_{n} \text{ is even},
\\
\ \frac{2(2L_n+1)}{L_{n}+1}, \text{  if } \# A_{n} \text{ is odd}.
\end{cases}
$$
\end{claim}
Proof of Claim \ref{countlem}. Notice first that 
\begin{align*}\label{gml1}
R_{A_{n}}Tx_n&=R_{A_{n}}\sum_ka_k(\sum_{j\notin\supp y_{k}}e_j^*(Ty_k)e_j)\quad\textrm{since}\, \supp y_{k}\cap\supp Ty_{k}=\emptyset\\
&=\sum_{i\in A_{n}}\sum_{j\in\supp y_{i}}(\sum_{k: k\ne i}a_{k}e_{j}^{*}(Ty_{k}))e_{j}.
\notag
\end{align*}
whereas for any partition $(B,C)$ of $A_n$ we have
\begin{align*}
R_{B}TR_{C}x_{n}=\sum_{i\in B}\sum_{j\in\supp y_{i}}(\sum_{k\in C}a_{k}e^{*}_{j}(Ty_{k}))e_{j}.
\end{align*}
Fix $i\in A_n$ and $j\in \supp y_i$. We shall prove that 
$$
\sum_{k: k\ne i}a_{k}e_{j}^{*}(Ty_{k})=\frac{\lambda_n}{\# \mathcal{P}_n} \sum_{(B,C)\in \mathcal{P}_n}e_j^*(R_{B}TR_{C}x_n).
$$
Indeed, by the definition of $R_B$,  if  $e_{j}^{*}(R_{B}TR_{C}x_n)\neq 0$ then $i\in B$. Thus for any $k\ne i$ there are as many terms $a_{k}e_{j}^{*}(Ty_{k})$ in the sum $\sum_{(B,C)\in\mathcal{P}_{n}}e_{j}^{*}(R_{B}TR_{C}x_{n})$  as is the cardinality of the set $\{(B,C)\in\mathcal{P}_{n}: i\in B, k\in C\}$. The latter is equal to $\frac{\# \mathcal{P}_n}{\lambda_n}$, which ends the proof of the claim. 

The following claim ends the proof of Prop. \ref{gow1}.
\begin{claim}\label{final}
 $\lim_nTx_n= 0$.
\end{claim}
Proof of Claim \ref{final}. Assume $\inf_{n\in N}\norm[Tx_n]>0$ for some infinite $N\subset\N$. Notice that $A_nTx_n\to 0$. Indeed, by Claim \ref{countlem}, $A_nTx_n=\frac{\lambda_n}{\# \mathcal{P}_n}\sum_{(B,C)\in \mathcal{P}_n}R_{B}TR_{C}x_n$ for some $0<\lambda_n\leq 4$. On the other hand, by Claim \ref{partition} we have 
\begin{align*}
 \lim_{n}\ (\sup\{\norm[R_{C}TR_{B}x_{n}]: (B,C)\,\textrm{ partition of}\, A_{n}\}) =0 .
\end{align*}
Hence, after small perturbation, we can assume that $\supp Tx_n\cap \supp x_n=\emptyset$, $n\in N$ with $N$ infinite. Apply Fact \ref{fact2}(a) to $u_n=Tx_n$ and $v_n=x_n$, $n\in N$, obtaining sequences $(z_k)$ and $(w_k)$ with $z_k=\sum_{n\in J_k}b_nx_n$, $w_k=\sum_{n\in J_k}b_nTx_n$, $\norm[w_k]=1$, $k\in\N$, and $z_k\to 0$, which contradicts boundedness of $T$.  

\end{proof}

\end{document}